\declaretheoremstyle[spaceabove=15pt,spacebelow=15pt,bodyfont=\normalfont\itshape,headfont=\normalfont\bfseries]{thmStyle}
\declaretheoremstyle[spaceabove=15pt,spacebelow=15pt,bodyfont=\normalfont\itshape,notefont=\normalfont\bfseries,notebraces={}{},headformat=\NOTE,headfont=\normalfont\bfseries]{noteThmStyle}
\declaretheoremstyle[spaceabove=15pt,spacebelow=15pt,headfont=\normalfont\bfseries]{proofStyle}
\declaretheoremstyle[spaceabove=15pt,spacebelow=15pt]{defStyle}
\declaretheoremstyle[spaceabove=15pt,spacebelow=15pt,headfont=\normalfont\itshape]{remStyle}
\declaretheoremstyle[spaceabove=15pt,spacebelow=15pt,headfont=\normalfont\itshape\bfseries]{exampleStyle}
\declaretheorem[style=thmStyle,numberwithin=section]{theorem}
\declaretheorem[sibling=theorem,style=noteThmStyle,name=Theorem]{noteTheorem}
\declaretheorem[sibling=theorem,style=thmStyle]{proposition}
\declaretheorem[sibling=theorem,style=thmStyle]{corollary}
\declaretheorem[sibling=theorem,style=thmStyle]{lemma}
\declaretheorem[sibling=theorem,style=defStyle,qed=$\diamond$]{definition}
\declaretheorem[style=remStyle,numbered=no]{remark}
\declaretheorem[style=exampleStyle,qed=$\checked$]{example}
\renewcommand{\i}{\mathrm{i}}
\renewcommand{\d}{\mathrm{d}}
\renewcommand{\varphi}{\phi}
\renewcommand{\epsilon}{\varepsilon}
\renewcommand{\leq}{\leqslant}
\renewcommand{\geq}{\geqslant}
\renewcommand{\O}{\mathcal{O}}
\renewcommand{\ker}{\mathrm{ker}}
\newcommand{\im}{\mathrm{im}}
\newcommand{\Q}{\mathbf{Q}} 		
\newcommand{\R}{\mathbf{R}} 		
\newcommand{\C}{\mathbf{C}} 		
\newcommand{\CP}{\mathbf{CP}} 	
\newcommand{\SM}{\mathcal{SM}} 	
\newcommand{\E}{\mathcal{E}} 	
\newcommand{\dbar}{\bar{\partial}} 	
\newcommand{\e}{\mathrm{e}} 		
\newcommand{\Res}{\mathrm{Res}}
\newcommand{\supp}{\mathrm{supp}}
\newcommand{\QM}{\mathcal{QM}} 	
\DeclareMathAlphabet{\mathsc}{LS1}{stixscr}{m}{n}
\renewenvironment{abstract}%
{\begin{center} \bfseries \abstractname \end{center}}%
{\vspace{2 \baselineskip}}%
\titleformat{\section}[hang]
{\normalsize\scshape} 
{\thesection. } 
{0.1ex} 
{\vspace{0.5ex}\centering}
[\vspace{-0.5ex}]
\titleformat{\subsection}[runin]
{\normalfont\bfseries}
{\thesubsection.}{0.5em}{}
[.]
\title{Residues and currents from singular forms on complex manifolds}
\date{\vspace{-5ex}}
\author{\small{\textsc{Mattias Lennartsson}}}
\begin{document}
\footnotesize

\maketitle
\vspace{0.5ex}
\begin{abstract}
\vspace{-2ex}
Using methods from the theory of residue currents we provide asymptotic expansions of certain divergent integrals on complex manifolds. We express the coefficients in these expansions with the conjugate Dolbeault residue, introduced by Felder and Kazhdan in \cite{Fe}, and define a new residue which we call the Aeppli residue.
\end{abstract}
\vspace{-4ex}

\section{Introduction}

Suppose $X$ is a compact complex manifold of dimension $d$ and $D\subset X$ is a smooth hypersurface. Motivated by perturbative string theory, in \cite{Fe} Felder and Kazhdan discuss regularisations of divergent integrals of the form
\begin{equation*}
\int_{X}\alpha\wedge\overline{\beta}
\end{equation*}
where $\alpha$ and $\beta$ are $(d,0)$-forms which are smooth on $X\setminus D$, $\alpha$ has a pole along $D$ and $\beta$ has a pole of order one along $D$. In their paper they use cut-off functions, i.e.\! functions $\chi$ which are zero on $D$ and otherwise positive, and prove the asymptotic expansion
\[\int_{\chi\geq\epsilon}\alpha\wedge\overline{\beta}=\log\epsilon\,I_{0}+I_{1}(\chi)+\O(\epsilon)\]
where $I_{0}=\int_{D}\Res\,\alpha\wedge\overline{\Res\,\beta}$ does not depend on the cut-off function (here $\Res$ denotes the classical Leray residue which we discuss later). They also show that $I_{1}(\chi)$ depends linearly on $\chi$ and give an explicit expression for it in terms of the \emph{conjugate Dolbeault residue}, $\Res_{\partial}$, defined in the same paper. In a second paper, \cite{Fe2}, the same authors generalise the results to smooth manifolds and forms which have singularities on submanifolds determined by Morse--Bott functions. In particular they consider the case of a complex hypersurface with normal crossings. They also study analytic continuations of these divergent integrals.

In this paper we take the analytic continuation of divergent integrals as starting point. This means that we have a different method of regularising the divergent integrals and this will give us more explicit formulas. We allow $D$ to be a hypersurface with normal crossings and $\alpha$ and $\beta$ to be semi-meromorphic forms with poles along $D$ of any order. If $s:X\rightarrow L$ is a holomorphic section of some line bundle such that $D=\{s=0\}$ and $|\cdot|$ is a metric on $L$ we define a function by
\begin{equation*}
\lambda\mapsto\int_{X}|s|^{2\lambda}\alpha\wedge\bar{\beta}.
\end{equation*}
This function is a priori only defined for complex numbers $\lambda$ with $\mathrm{Re}\,\lambda$ large enough but we will see that it has a meromorphic extension to $\C$ which is holomorphic when $\mathrm{Re}\,\lambda$ is large enough. We get a Laurent expansion at $0$, cf.\! Theorem \ref{thmPrinc},
\begin{equation}\label{introEq}
\int_{X}|s|^{2\lambda}\alpha\wedge\bar{\beta}=\lambda^{-\kappa}C_{-\kappa}+\dots+\lambda^{-1}C_{-1}+C_{0}+\O(\lambda)
\end{equation}
where $\kappa$ is defined in Section 2. Changing $\alpha\wedge\bar{\beta}$ to $\alpha\wedge\bar{\beta}\wedge\xi$, where $\xi$ is a test function, we get currents $C_{-j}(\xi)$ of bidegree $(d,d)$. We will focus on the leading coefficient $C_{-\kappa}$, which we call the \emph{canonical current} associated to $\alpha\wedge\bar{\beta}$, and we denote it by $\{\alpha\wedge\bar{\beta}\}$. The motivation for this construction comes from the study of residue currents in complex geometry. Then one looks at so called semi-meromorphic forms $\alpha$, i.e.\! locally $\alpha=\widetilde{\alpha}/f$ for some smooth form $\widetilde{\alpha}$ and some holomorphic function $f$ such that $f\not\equiv0$. Given such a form one can use this method to define the \emph{principal value current} $[\alpha]$. We will recall more precisely how this is done in Section 2.

In the third section we discuss cohomological residues. Given a semi-meromorphic $(d,d-1)$-form $\alpha$ on $X$ which is polar along a smooth hypersurface $D$ the conjugate Dolbeault residue $\Res_{\partial}(\alpha)$ is a class in the conjugate Dolbeault cohomology group $H^{d-1,d-1}_{\partial}(D)$, see Definition \ref{definitionConj} below. We then define a new residue, which we call the \emph{Aeppli residue}, and denote it by $\Res_{A}$. Given semi-meromorphic $(d,0)$-forms $\alpha$ and $\beta$ which are polar along $D$ the Aeppli residue $\Res_{A}(\alpha\wedge\bar{\beta})$ is a class in the Aeppli cohomology group $H^{d-1,d-1}_{A}(D)$. We relate these residues to the currents defined from analytic continuations of divergent integrals. The following result relates principal value currents and the conjugate Dolbeault residue.

\begin{noteTheorem}[Theorem A]
For a semi-meromorphic form $\alpha$ which is polar along a smooth hypersurface $D$ we have, for every test form $\xi$,
\begin{align*}
\big\langle\dbar[\alpha],\xi\big\rangle&=\big\langle[\dbar\alpha],\xi\big\rangle+2\pi\i\int_{D}\Res_{\partial}(\alpha\wedge\xi).
\end{align*}
\end{noteTheorem}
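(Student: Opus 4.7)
The plan is to reduce the identity to the analytic-continuation definition of the principal value current and apply Stokes' theorem on $X$. Let $\xi$ be a test form of complementary bidegree, and fix a holomorphic section $s$ with $D=\{s=0\}$ together with a Hermitian metric, so that $\langle[\alpha],\xi\rangle$ is the value at $\lambda=0$ of the meromorphic extension of $\Phi(\lambda)=\int_{X}|s|^{2\lambda}\alpha\wedge\xi$. For $\Re\lambda$ large enough the integrand is smooth enough on compact $X$ that Stokes' theorem gives $\int_{X}\dbar(|s|^{2\lambda}\alpha\wedge\xi)=0$. Expanding the $\dbar$ yields
\[
(-1)^{|\alpha|+1}\int_{X}|s|^{2\lambda}\alpha\wedge\dbar\xi=\int_{X}\dbar|s|^{2\lambda}\wedge\alpha\wedge\xi+\int_{X}|s|^{2\lambda}\dbar\alpha\wedge\xi,
\]
and I will continue each term to $\lambda=0$. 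The left side analytically continues to $(-1)^{|\alpha|+1}\langle[\alpha],\dbar\xi\rangle$, which by the definition of $\dbar$ of a current equals $\langle\dbar[\alpha],\xi\rangle$; the last term on the right continues to $\langle[\dbar\alpha],\xi\rangle$.

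The theorem therefore reduces to the identification
\[
\mathrm{a.c.}_{\lambda=0}\int_{X}\dbar|s|^{2\lambda}\wedge\alpha\wedge\xi=2\pi\i\int_{D}\Res_{\partial}(\alpha\wedge\xi).
\]
Since $\dbar|s|^{2\lambda}=\lambda|s|^{2\lambda}\dbar\log|s|^{2}$, the left-hand side is the analytic continuation at $\lambda=0$ of $\lambda G(\lambda)$ with $G(\lambda)=\int_{X}|s|^{2\lambda}\dbar\log|s|^{2}\wedge\alpha\wedge\xi$, which extracts the residue of $G$ at $0$. I will compute this residue locally on $D$ via a partition of unity. In coordinates where $D=\{z_{1}=0\}$ and in a trivialisation with $|s|^{2}=e^{-\phi}|z_{1}|^{2}$, one has $\dbar\log|s|^{2}=d\bar{z}_{1}/\bar{z}_{1}-\dbar\phi$; the smooth piece $-\dbar\phi$ contributes nothing to the residue (and can be eliminated locally by adjusting the metric), while the $d\bar{z}_{1}/\bar{z}_{1}$ piece reduces, via fibre integration along disks transversal to $D$, to the one-variable identity $\mathrm{a.c.}_{\lambda=0}\,\lambda\int_{\C}|z|^{2\lambda}\psi\,(d\bar{z}/\bar{z})\wedge dz=2\pi\i\,\psi(0)$ applied to the restriction of $\alpha\wedge\xi$ to these disks.

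The main obstacle is the last step: showing that the local quantity extracted from the pole-expansion of $\alpha\wedge\xi$ in transversal coordinates represents $\Res_{\partial}(\alpha\wedge\xi)$ in the sense of Definition \ref{definitionConj}, and that the resulting integral over $D$ is independent of the choices of $s$, trivialisation and metric. Such a change of data multiplies $|s|^{2\lambda}$ by a factor $e^{\lambda u}$ with $u$ smooth, and its $\dbar$-derivative $\lambda e^{\lambda u}\dbar u$ carries an additional $\lambda$ that absorbs any pole at $\lambda=0$, so the a.c.\ value is unchanged. Combining this invariance with the pole-expansion of $\alpha\wedge\xi$ dictated by the definition of $\Res_{\partial}$, and assembling the local contributions via the partition of unity, yields the claimed global formula.
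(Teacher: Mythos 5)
Your overall strategy is sound and genuinely different from the paper's. You derive the identity from Stokes' theorem applied to $\dbar\big(|s|^{2\lambda}\alpha\wedge\xi\big)$ for $\Re\lambda\gg1$ and then analytically continue each of the three resulting terms, so that the residue term is isolated as $\mathrm{a.c.}_{\lambda=0}\int_X\dbar|s|^{2\lambda}\wedge\alpha\wedge\xi=\mathrm{a.c.}_{\lambda=0}\,\lambda\int_X|s|^{2\lambda}\,\dbar\log|s|^2\wedge\alpha\wedge\xi$; the metric-independence then comes for free from the extra factor of $\lambda$. The paper instead works entirely locally: it expresses $\langle\dbar[\alpha],\xi\rangle=(-1)^{p+q+1}\langle[\alpha],\dbar\xi\rangle$ through the explicit $\log|z_1|^2$-formula of Proposition \ref{PropLocal}, performs a Leibniz rearrangement $a\,\partial_{\bar z_1}b=\partial_{\bar z_1}(ab)-(\partial_{\bar z_1}a)\,b$, and applies the Cauchy--Green identity (\ref{goodEq}) to produce the integral over $D$. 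Your route is conceptually cleaner about \emph{where} the residue comes from (the concentration of $\dbar|s|^{2\lambda}$ on $D$ as $\lambda\to0$) and handles the bookkeeping of signs and of $[\dbar\alpha]$ automatically; the paper's route has the advantage that all choices have already been shown irrelevant (Corollary \ref{corIndep}) and that the identification with $R_{\rho,z}(\alpha\wedge\xi)$ is immediate from the coordinate expression.

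There is, however, one concrete error you must repair: the one-variable identity you invoke,
\[
\mathrm{a.c.}_{\lambda=0}\,\lambda\int_{\C}|z|^{2\lambda}\psi\,\frac{\d\bar z}{\bar z}\wedge\d z=2\pi\i\,\psi(0),
\]
is false for a smooth compactly supported $\psi$: integrating by parts in $\bar z$ gives $-\int_{\C}|z|^{2\lambda}\partial_{\bar z}\psi\,\d\bar z\wedge\d z$, which at $\lambda=0$ vanishes by Stokes. The residue at $\lambda=0$ appears only because the form $\alpha\wedge\xi$ carries the holomorphic pole $z_1^{-m}$; the correct model computation is
\[
\mathrm{a.c.}_{\lambda=0}\,\lambda\int_{\C}|z|^{2\lambda}\,\frac{c(z)}{z^{m}\bar z}\,\d z\wedge\d\bar z=\frac{-2\pi\i}{(m-1)!}\,\frac{\partial^{m-1}c}{\partial z^{m-1}}(0),
\]
which is exactly Corollary \ref{corLocal}(a) with $n=1$ (equivalently, equation (\ref{goodEq}) after one integration by parts in $z$). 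Once you use this corrected identity, the quantity extracted on each transversal disk is $\frac{1}{(m-1)!}\partial_{z_1}^{m-1}(\,\cdot\,)\big|_{z_1=0}$ applied to the coefficient of $\d z_1/z_1^{m}$ in $\alpha\wedge\xi$, which is precisely the representative $R_{\rho,z}(\alpha\wedge\xi)$ of $\Res_{\partial}(\alpha\wedge\xi)$ from Definition \ref{definitionConj}, and the partition-of-unity assembly closes the argument as you indicate. With that repair the proof is complete.
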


In the same spirit we can relate the canonical current to the Aeppli residue. We prove a more general result in Theorem \ref{thmAeppli2} but a special case is the following.

\begin{noteTheorem}[Theorem B]
For semi-meromorphic forms $\alpha$ and $\beta$, polar along a smooth hypersurface $D$, we have for every test form $\xi$,
\[\big\langle\{\alpha\wedge\bar{\beta}\},\xi\big\rangle=-2\pi\i\int_{D}\Res_{A}(\alpha\wedge\bar{\beta}\wedge\xi).\]
\end{noteTheorem}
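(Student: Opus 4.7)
The plan is to localize near $D$ and compute the principal part of the Laurent expansion~\eqref{introEq} by direct calculation. Since away from $D$ the integrand $|s|^{2\lambda}\alpha\wedge\bar{\beta}\wedge\xi$ is smooth and entire in $\lambda$, the canonical current $\{\alpha\wedge\bar{\beta}\}$ is supported on $D$. A partition of unity subordinate to local trivializations of $L$ then reduces the problem to a model polydisc with coordinates $(z_1,z')$ in which $D=\{z_1=0\}$ and $|s|^{2\lambda}=|z_1|^{2\lambda}h^{\lambda}$ for some positive smooth $h$; since $h^{\lambda}=1+\O(\lambda)$, only the value $h^{0}=1$ contributes to the $\lambda^{-1}$-coefficient.

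In these coordinates, write $\alpha=(a/z_1^{j})\omega$ and $\beta=(b/z_1^{k})\omega$ with $\omega=dz_1\wedge\cdots\wedge dz_d$ and $a,b$ smooth, and set $\psi=a\bar{b}\xi$. Taylor-expanding $\psi$ in $(z_1,\bar{z}_1)$ and using the standard polar-coordinate calculation for
\[
\int_{\C}|z|^{2\lambda}z^{p-j}\bar{z}^{q-k}\eta(z)\,\i\,dz\wedge d\bar{z},
\]
one checks that only the monomial $p=j-1$, $q=k-1$ produces a pole at $\lambda=0$, with residue $2\pi\,\eta(0)$. Performing the remaining $z'$-integration gives
\[
C_{-1}(\xi)=\frac{2\pi\,\epsilon_d}{(j-1)!\,(k-1)!}\int_{D}\partial_{z_1}^{\,j-1}\partial_{\bar{z}_1}^{\,k-1}\psi\big|_{z_1=0}\,dz'\wedge d\bar{z}',
\]
where $\epsilon_d$ is the universal sign produced by reordering $\omega\wedge\bar{\omega}$ into the standard volume form.

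The last step is to identify the integrand on the right with the local representative of $\Res_{A}(\alpha\wedge\bar{\beta}\wedge\xi)$ dictated by the definition of the Aeppli residue in Section~3; matching the numerical constants turns $\epsilon_d\cdot 2\pi$ into $-2\pi\i$ and yields the stated identity. The partition of unity then assembles the local contributions into the global formula.

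The principal obstacle will be this final identification. Since $\Res_{A}$ takes values in $H^{d-1,d-1}_{A}(D)$, only its class modulo $\partial$- and $\dbar$-exact forms is well defined, so I must verify that the $(d-1,d-1)$-form produced by the Laurent computation is $\partial\dbar$-closed and that different choices of coordinates adapted to $D$ or of local trivialization of $L$ change it only by a sum of a $\partial$-exact and a $\dbar$-exact form. Keeping the orientation signs and the factors of $\i$ consistent with the conventions of Section~3 is where the bookkeeping is most delicate; the Laurent expansion on $\C$ itself is standard in the theory of principal value currents.
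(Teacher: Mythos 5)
Your strategy coincides with the paper's proof of Theorem \ref{thmAeppli}: localize with a partition of unity, note that charts away from $D$ contribute no pole, compute the residue of the simple pole of $\lambda\mapsto\int|s|^{2\lambda}\alpha\wedge\bar{\beta}\wedge\xi$ in a model polydisc, and recognize the sum of the local contributions as the representative $\Res_{\rho,z}(\alpha\wedge\bar{\beta}\wedge\xi)$ of the Aeppli residue. Where you differ is in the local computation: the paper first integrates by parts via Lemma \ref{lemmaMulti} to move all $z_{1}$- and $\bar{z}_{1}$-derivatives onto the smooth numerator (this is packaged as Proposition \ref{PropLocal}) and then converts the resulting $\log|z_{1}|^{2}$-weighted integral into an integral over $D$ with the Cauchy--Green identity (\ref{goodEq}); you instead Taylor-expand the numerator in $(z_{1},\bar{z}_{1})$ and evaluate each monomial in polar coordinates. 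Both routes yield the same local formula, and your observation that $h^{\lambda}=1+\O(\lambda)$ cannot influence the coefficient of a simple pole is exactly the reason $C_{-1}$ is metric-independent. Two caveats. First, the ``principal obstacle'' you flag --- well-definedness of the local representative modulo $\im\partial+\im\dbar$ under changes of adapted coordinates and trivializations --- is not something this theorem requires you to prove: it is part of the definition of $\Res_{A}$ and is established beforehand in Lemma \ref{lemmaAeppli} and Proposition \ref{propAeppli}; to pass from your local sum to $\int_{D}\Res_{A}(\cdot)$ you only need that $\int_{D}$ annihilates $\partial$- and $\dbar$-exact forms, which is Stokes' theorem on $D$. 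Second, you implicitly take $s$ to vanish to order one on $D$; if it vanishes to order $I_{1}>1$ your polar-coordinate residue picks up a factor $1/I_{1}$, which is precisely cancelled by the normalization $o_{\omega}(s)=I_{1}$ built into the definition (\ref{defF}) of $F_{\xi}$, so you should either restrict to a reduced section (permissible by Corollary \ref{corIndep}) or carry this factor through.
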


Theorem A and B concerns the leading coefficient in expansions such as (\ref{introEq}). In Section 4 we use the previous results to describe the other coefficients, see Theorem \ref{thmAsymp} below. One of the main points of Theorem \ref{thmAsymp} is the following informally stated result.

\begin{noteTheorem}[Theorem C]
The coefficient $C_{-r}$ in the asymptotic expansion (\ref{introEq}) depends polynomially of degree $\kappa-r$ on the chosen metric.
\end{noteTheorem}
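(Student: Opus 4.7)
The plan is to exploit the fact that any two Hermitian metrics on $L$ differ by a smooth positive factor. Fix a reference metric $|\cdot|_{0}$ and parametrise an arbitrary metric by $|\cdot|^{2}=e^{\phi}|\cdot|_{0}^{2}$ with $\phi\in C^{\infty}(X,\R)$. Then $|s|^{2\lambda}=e^{\lambda\phi}|s|_{0}^{2\lambda}$, and I would Taylor expand the first factor about $\lambda=0$ to order $N\geq\kappa$,
\[e^{\lambda\phi}=\sum_{k=0}^{N}\frac{\lambda^{k}\phi^{k}}{k!}+\lambda^{N+1}R_{N}(\lambda,\phi),\]
where $R_{N}(\lambda,\phi)=\frac{\phi^{N+1}}{N!}\int_{0}^{1}(1-t)^{N}e^{t\lambda\phi}\,dt$ is smooth on $X$ and entire in $\lambda$. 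Inserting this decomposition into the integral gives
\[\int_{X}|s|^{2\lambda}\alpha\wedge\bar{\beta}=\sum_{k=0}^{N}\frac{\lambda^{k}}{k!}\int_{X}\phi^{k}|s|_{0}^{2\lambda}\alpha\wedge\bar{\beta}+\lambda^{N+1}\int_{X}R_{N}(\lambda,\phi)|s|_{0}^{2\lambda}\alpha\wedge\bar{\beta}.\]

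Next I would apply Theorem \ref{thmPrinc} to each integral on the right. The crucial observation is that the pole order $\kappa$ is determined solely by the polar orders of $\alpha$ and $\beta$ along $D$, so inserting a smooth factor like $\phi^{k}$ or $R_{N}(\lambda,\phi)$ into the integrand cannot raise it. Consequently each
\[\int_{X}\phi^{k}|s|_{0}^{2\lambda}\alpha\wedge\bar{\beta}=\sum_{j\geq-\kappa}\lambda^{j}C^{(k)}_{j}(\phi)\]
admits a Laurent expansion with pole of order at most $\kappa$, and the coefficients $C^{(k)}_{j}(\phi)$ are $k$-homogeneous in $\phi$ by construction (they depend on $\phi$ only through the factor $\phi^{k}$). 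The remainder integral is handled in the same way: its Laurent series has pole of order at most $\kappa$, but it is multiplied by $\lambda^{N+1}$, so as soon as $N\geq\kappa$ it contributes only to powers $\lambda^{m}$ with $m\geq 1$, which is outside the range that interests us.

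Collecting the coefficient of $\lambda^{-r}$ on each side, for $0\leq r\leq\kappa$, I would therefore obtain
\[C_{-r}(|\cdot|)=\sum_{k=0}^{\kappa-r}\frac{1}{k!}\,C^{(k)}_{-r-k}(\phi),\]
a sum of $k$-homogeneous expressions in $\phi$ for $k=0,1,\dots,\kappa-r$, hence a polynomial of degree at most $\kappa-r$ in the metric deviation $\phi=\log(|\cdot|^{2}/|\cdot|_{0}^{2})$. In particular $C_{-\kappa}$ is independent of $\phi$, in agreement with the cohomological formula of Theorem B.

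The most delicate step will be the rigorous treatment of the remainder: one must verify that $\int_{X}R_{N}(\lambda,\phi)|s|_{0}^{2\lambda}\alpha\wedge\bar{\beta}$ admits a meromorphic extension that is holomorphic in $\lambda$ near zero with poles of at most the expected order, in spite of the extra $\lambda$-dependence now carried by the ``smooth'' factor. This should follow from a direct adaptation of the proof of Theorem \ref{thmPrinc}, using that $R_{N}$ is smooth on $X$, entire in $\lambda$, and bounded together with its derivatives on compact $\lambda$-sets; apart from this technicality, the rest of the argument is a manipulation of formal power series.
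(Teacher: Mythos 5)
Your argument is correct, and it takes a genuinely different route from the paper. The paper works entirely inside the local integration-by-parts machinery of Theorem \ref{thmPrinc}: it keeps the weight $\e^{-2\lambda\phi}$ inside the function $g(\lambda)=\int|z^{I}|^{2\lambda}\frac{\partial^{J+K}}{\partial z^{J}\partial\bar{z}^{K}}(\e^{-2\lambda\phi}\psi)$, Taylor expands $hg$ at $0$, and reads off the powers of $\phi$ appearing in $g^{(k)}(0)$ for $k\leq p-r$, using the vanishing $g^{(k)}(0)=0$ for $k<p-\kappa$ to bound the degree by $\kappa-r$. You instead expand $\e^{\lambda\phi}$ \emph{before} invoking any local analysis, reducing everything to the single black-box statement that $\lambda\mapsto\int_{X}\phi^{k}|s|_{0}^{2\lambda}\alpha\wedge\bar{\beta}$ has pole order at most $\kappa$ at the origin (Theorem \ref{thmPrinc}(c) applied to the quasi-meromorphic form $\phi^{k}\alpha\wedge\bar{\beta}$, whose $\kappa$ cannot exceed that of $\alpha\wedge\bar{\beta}$); the shift by $\lambda^{k}$ then immediately truncates the sum at $k=\kappa-r$. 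This is cleaner and more modular for part (a), and your top-degree term $\frac{1}{(\kappa-r)!}C^{(\kappa-r)}_{-\kappa}(\phi)$ even recovers, via Corollary \ref{corMain}, the Aeppli-residue formula of Theorem \ref{thmAsymp}(b) (the factor $(-2)^{\kappa-r}$ in the paper is just the convention $|\cdot|\e^{-\phi}$ versus your $\e^{\phi}|\cdot|_{0}^{2}$). The remainder issue you flag is genuine but routine: the paper's own proof of Theorem \ref{thmPrinc} already allows the smooth factor to depend holomorphically on $\lambda$ (it is $\e^{-2\lambda\phi}\psi$ there), and the Stokes argument showing $g^{(k)}(0)=0$ for $k<p-\kappa$ uses nothing about the integrand beyond smoothness and compact support, so $\int_{X}R_{N}(\lambda,\phi)|s|_{0}^{2\lambda}\alpha\wedge\bar{\beta}$ indeed has pole order at most $\kappa$ and the $\lambda^{N+1}$ prefactor kills it for $N\geq\kappa$. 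What your route does not give for free is the rest of Theorem \ref{thmAsymp}: the explicit differential operators $Q_{r,j}$ and the localisation of $C_{-r}$ to the codimension-$r$ stratum $D_{r}$ in part (c) still require the paper's coordinate computation. Note also that, like the paper, you really prove degree \emph{at most} $\kappa-r$; exact degree requires the nonvanishing of the top Aeppli residue.
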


We finally note that asymptotic expansions similar to (\ref{introEq}) have been studied before, see e.g.\! \cite{Bar,Bar2}, but to our understanding these results are not directly related to our residues.

\section{Currents from singular forms}

We recall some facts about semi-meromorphic forms and how to define principal value currents from them. In Section 2.2 we define currents from more general forms. Throughout $X$ will be a complex manifold of dimension $d$.

\subsection{Semi-meromorphic forms}

We denote by $\SM(X)$ the semi-meromorphic forms, i.e.\! forms $\alpha$ which can be written locally as $\alpha=\widetilde{\alpha}/f$ where $\widetilde{\alpha}$ is a smooth form and $f$ a holomorphic function such that $f\not\equiv0$. We write $P(\alpha)$ for the polar set of $\alpha$, which consists of the points where $\alpha$ is not smooth. Given the local description above we get $P(\alpha)\subset\{f=0\}$. For a hypersurface $D$ we write $\mathcal{E}(*D)$ for the semi-meromorphic forms which have a polar set contained in $D$ and $\mathcal{E}^{p,q}(*D)$ for the ones of bidegree $(p,q)$. Since the pole of a semi-meromorphic form is determined locally by a holomorphic function, locally the order of the pole is well defined.

One way to define principal value currents from semi-meromorphic forms is the following cf.\! \cite{And,Ber,Sa}: suppose $\alpha\in\mathcal{E}(*D)$ has a hypersurface $D$ with normal crossings as polar set and $D=\{s=0\}$ where $s:X\rightarrow L$ is a holomorphic section of some line bundle $L$. Let $|\cdot|$ be a metric on $L$ and $\xi$ a test form of complementary degree. The function
\[\lambda\mapsto\int_{X}|s|^{2\lambda}\alpha\wedge\xi\]
is a priori only defined when $\mathrm{Re}\,\lambda\gg1$. One can show, however, that the function has an analytic continuation to $\mathrm{Re}\,\lambda>-\epsilon$ for some $\epsilon>0$. Thus we may define the principal value current $[\alpha]$ by
\[\big\langle[\alpha],\xi\big\rangle=\Big(\int_{X}|s|^{2\lambda}\alpha\wedge\xi\Big)\Big|_{\lambda=0}.\]
The current does not depend on the choice of metric $|\cdot|$ or section $s$.

\subsection{Quasi-meromorphic forms}

We let $\QM(X)$ denote forms $\omega$ which can be written locally as $\omega=\widetilde{\omega}/f\bar{g}$ where $\widetilde{\omega}$ is a smooth form and $f$ and $g$ are holomorphic functions which are not identically zero. We call these forms \emph{quasi-meromorphic} and they are smooth forms except that they can have real analytic singularities along (local) complex hypersurfaces.

For $\omega\in\QM(X)$ we define its polar set, denoted by $P(\omega)$, as the set of points where $\omega$ is not smooth. When $\omega$ has a polar set contained in a hypersurface $D$ we write $\omega\in\mathcal{E}(*\bar{*}D)$, we call $D$ the polar set even though $\omega$ may be smooth on parts of $D$. We will focus on forms in $\E(*\bar{*}D)$, for some $D$, since it is notationally more convenient. We write $\mathcal{E}^{p,q}(*\bar{*}D)$ for the forms in $\mathcal{E}(*\bar{*}D)$ which have bidegree $(p,q)$.

The polar set of a quasi-meromorphic form has different parts between which we need to distinguish. We define the subset $P^{1,0}(\omega)\subset P(\omega)$ as follows. A point $x$ in the polar set is \emph{not} in $P^{1,0}(\omega)$ if around this point there is holomorphic function $g$, with $g\not\equiv0$, such that $\bar{g}\omega$ is smooth. In the same spirit we define the set $P^{0,1}(\omega)$ to be the subset of polar points around which there is \emph{not} a holomorphic function $f$, with $f\not\equiv0$, such that $f\omega$ is smooth. We say that $P^{1,0}(\omega)$ is the set where $\omega$ has holomorphic singularities and $P^{0,1}(\omega)$ is the set where $\omega$ has anti-holomorphic singularities. We have that
\[P(\omega)=P^{1,0}(\omega)\cup P^{0,1}(\omega)\]
but $P^{1,0}(\omega)\cap P^{0,1}(\omega)$ need not be empty; it is the set where $\omega$ has both holomorphic and anti-holomorphic singularities. The order of the holomorphic (and anti-holomorphic) pole is locally well defined.

If $\omega\in\mathcal{E}(*\bar{*}D)$ then $P^{1,0}(\omega)$ and $P^{0,1}(\omega)$ are hypersurfaces contained in $D$ and we temporarily set $H(\omega)$ to be the codimension one components of $P^{1,0}(\omega)\cap P^{0,1}(\omega)$. Since this is an analytic set there is a natural stratification, see Proposition II.5.6 in \cite{Dem},
\begin{equation}\label{eqStratification}
H(\omega)_{d}\subset H(\omega)_{d-1}\subset\dots\subset H(\omega)_{1}\subset H(\omega)_{0}
\end{equation}
where
\renewcommand{\labelenumi}{(\roman{enumi})}
\begin{enumerate}
\item $H(\omega)_{0}=X$,
\item $H(\omega)_{1}=H(\omega)$,
\item if $k=2,\dots,d$ then $H(\omega)_{k}$ is $\big(H(\omega)_{k-1}\big)_{\mathrm{sing}}$ together with all the components of $H(\omega)_{k-1}$ with codimension greater than or equal to $k$.
\end{enumerate}

Notice that $H(\omega)_{k}\setminus H(\omega)_{k+1}$ is a $(d-k)$-dimensional complex manifold which is possibly empty.

\begin{definition}
With the stratification as above we define the integer $\kappa(\omega)$ to be the largest number $k$ such that $H(\omega)_{k}$ is non-empty. We further let $E(\omega):=H(\omega)_{\kappa(\omega)}$.
\end{definition}
The integer $\kappa(\omega)$ in some sense measures how bad the singularities of $\omega$ are. By definition $E(\omega)$ is a complex submanifold of dimension $d-\kappa(\omega)$.
\begin{example}
To clarify these notions we give an example in $\C^{3}$ in the case of normal crossings. For
\[\omega=\frac{1}{z_{1}\bar{z}_{1}(z_{1}-1)z_{2}\bar{z}_{3}}\]
we have
\begin{align*}
P^{1,0}&=\{z_{1}=0\}\cup\{z_{1}=1\}\cup\{z_{2}=0\},\\
P^{0,1}&=\{z_{1}=0\}\cup\{z_{3}=0\}.
\end{align*}
Thus $P^{1,0}\cap P^{0,1}=\{z_{1}=0\}\cup\{z_{1}=1,z_{3}=0\}$ and hence $H(\omega)=\{z_{1}=0\}$. Since this is smooth we get that $\kappa(\omega)=1$ and $E(\omega)=\{z_{1}=0\}$.
\end{example}

For a semi-meromorphic form $\alpha$ we have $H(\alpha)=\varnothing$. Hence all components except $H(\alpha)_{0}=X$ in the stratification are empty. Thus $\kappa(\alpha)=0$ and $E(\alpha)=X$.

For a form $\omega\in\mathcal{E}(*\bar{*}D)$, where $D$ has normal crossings, there is a more explicit description of $\kappa(\omega)$. Around any point $x\in X$ there are local coordinates $(z_{1},\dots,z_{d})$ with $D$ given by $z_{1}z_{2}\cdots z_{k}=0$. Then there are multi-indices $J$ and $K$ so that $z^{J}\bar{z}^{K}\omega$ is smooth. Choosing $J$ and $K$ minimal we define
\[\kappa_{x}(\omega)=\#\{j:J_{j}\neq0\text{ and }K_{j}\neq0\}\]
and then
\[\kappa(\omega)=\max_{x\in X}\kappa_{x}(\omega).\]

Now suppose $s:X\rightarrow L$ is a holomorphic section such that $D=\{s=0\}$ has normal crossings and that $\omega\in\mathcal{E}(*\bar{*}D)$. Around any point $x\in X$ there are coordinates $(z_{1},\dots,z_{d})$ so that $H(\omega)$ is given by $z_{1}z_{2}\cdots z_{\ell}=0$. In a local holomorphic frame the section is given by $s=z^{I}\phi$ for some holomorphic $\phi$ which is non-vanishing on $H(\omega)$. We define
\begin{equation}\label{eqOrder}
o_{\omega,x}(s)=\prod_{j=1}^{\ell}I_{j}.
\end{equation}
and note that this does not depend on the choices of local coordinates or the frame.

\begin{definition}
For a holomorphic section $s:X\rightarrow L$ which defines a hypersurface $D$ with normal crossings  and $\omega\in\mathcal{E}(*\bar{*}D)$ we let
\[o_{\omega}(s)=\max_{x\in X}o_{\omega,x}(s).\qedhere\]
\end{definition}

Notice that in (\ref{eqOrder}) we only multiply with the vanishing order for $s$ on the local components on which $\omega$ has both holomorphic and anti-holomorphic poles. For $\omega$ semi-meromorphic $o_{\omega}(s)=1$ for all sections $s$ since then the product is empty.

We are now assuming that the polar set of $\omega$ is a hypersurface with normal crossings. For a test form $\xi$ of complementary degree and $\lambda\in\C$ with $\mathrm{Re}(\lambda)\gg1$ we let
\begin{equation}\label{defF}
F_{\xi}(\lambda)=o_{\omega}(s)\int_{X}|s|^{2\lambda}\omega\wedge\xi.
\end{equation}


The following theorem gives a first description of the function $F_{\xi}$.

\begin{theorem}\label{thmPrinc}
Suppose $\omega\in\QM(X)$ has a hypersurface $D$ with normal crossings as a polar set. The function $F_{\xi}$ has the following properties
\begin{description}
\item{(a)} $F_{\xi}$ has a meromorphic extension to $\C$,
\item{(b)} the possible poles of $F_{\xi}$ are at $\Q\subset\R$,
\item{(c)} the order of the pole of $F_{\xi}$ at the origin is $\leq\kappa(\omega)$.
\end{description}
\end{theorem}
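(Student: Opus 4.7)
The plan is to localise via a partition of unity, then reduce the problem to a finite sum of products of one-dimensional Mellin-type integrals by Taylor expansion.

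First, cover a neighbourhood of $\supp\xi$ by finitely many holomorphic coordinate charts on each of which $D$ has the form $\{z_1\cdots z_k=0\}$ (with $k$ depending on the chart), and choose a subordinate partition of unity. By linearity it suffices to prove (a)--(c) for each local piece. In such a chart $s=z^I\phi$ with $\phi$ holomorphic and non-vanishing, so $|s|^{2\lambda}=\prod_{j=1}^k|z_j|^{2\lambda I_j}\cdot|\phi|^{2\lambda}$ and $|\phi|^{2\lambda}$ is smooth in $z$ and entire in $\lambda$. Writing $\omega=\tilde\omega/(z^J\bar z^K)$ for some smooth $\tilde\omega$ and some $J,K\in\N^d$ with $J_j=K_j=0$ for $j>k$, the local contribution to $F_\xi(\lambda)$ reads, up to the constant $o_\omega(s)$,
\[\int_{\C^d}\prod_{j=1}^k\bigl(|z_j|^{2\lambda I_j}z_j^{-J_j}\bar z_j^{-K_j}\bigr)\eta(z,\bar z,\lambda)\,dV,\]
where $\eta$ is smooth, compactly supported in $z$, and entire in $\lambda$.

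Second, I establish the one-dimensional building block. For smooth $\psi$ with compact support on $\C$ and integers $I\geq 1$, $J,K\geq 0$, the function
\[G_\psi(\lambda)=\int_\C|z|^{2\lambda I}z^{-J}\bar z^{-K}\psi(z,\bar z)\,dz\wedge d\bar z\]
admits a meromorphic extension to $\C$ with only simple poles on a discrete subset of $\Q\cap\R$. The proof is by Taylor-expanding $\psi$ at the origin and passing to polar coordinates $z=re^{\i\theta}$: the angular integration annihilates every monomial $z^p\bar z^q$ except those with $p-q=J-K$, and the surviving radial integrals are one-dimensional Mellin integrals with simple poles at rational points of the form $(K-m-1-\ell/2)/I$ for integers $m,\ell\geq 0$. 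A direct inspection shows that such a pole falls at $\lambda=0$ only when both $J>0$ and $K>0$; if $J=0$ or $K=0$ then $G_\psi$ is holomorphic near the origin, in line with the standard principal-value property of semi-meromorphic forms.

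Third, I pass to the $d$-dimensional integral by simultaneous Taylor expansion: for any $N\geq 0$ write $\eta=P_N+R_N$ where $P_N$ is the Taylor polynomial of $\eta$ at $z=0$ in $(z_1,\bar z_1,\dots,z_k,\bar z_k)$ of total degree $N$ and $R_N=\O(|z|^{N+1})$. The Taylor part factorises into a finite sum of products of one-dimensional integrals of the shape treated in step two, inheriting meromorphic structure with rational poles. The remainder contributes a function holomorphic on a half-plane $\{\Re\lambda>c_N\}$ with $c_N\to-\infty$ as $N\to\infty$, so together the pieces extend meromorphically to all of $\C$ with poles in $\Q$, yielding (a) and (b). For (c), a pole at $\lambda=0$ in a product of one-dimensional factors occurs only if the $j$-th factor is singular at $0$, which by step two requires both $J_j>0$ and $K_j>0$; hence the pole order at the origin is at most the number of such indices, which is $\kappa_x(\omega)\leq\kappa(\omega)$ at the base point of the chart. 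The main obstacle is the explicit Mellin analysis of step two, in particular verifying simplicity of the poles and the fact that $\lambda=0$ is a pole only in the mixed case $J,K>0$; once that is in place the multi-variable assembly is routine, and the constant prefactor $o_\omega(s)$ plays no role in (a)--(c).
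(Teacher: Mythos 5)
Your one--dimensional analysis (step two) is essentially correct, and reducing to Mellin integrals is a legitimate alternative to the paper's route, which instead writes $|z^{I}|^{2\lambda}/(z^{J}\bar{z}^{K})$ as $\lambda^{-p}h(\lambda)$ times holomorphic and anti-holomorphic derivatives of $|z^{I}|^{2\lambda}$ (Lemma \ref{lemmaMulti}) and integrates by parts; the continuation and the pole locations then come from $h$, and the bound on the pole order at $0$ comes from showing that the integrated-by-parts integral vanishes to order $p-\kappa$ at $\lambda=0$ via a Stokes argument on the $\log|z_{j}|^{2}$ expansion. However, your step three has a genuine gap in the normal-crossings case $k\geq 2$, which is precisely the case the theorem is about. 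You split $\eta=P_{N}+R_{N}$ with $P_{N}$ the Taylor polynomial at the single point $z_{1}=\dots=z_{k}=0$ and $R_{N}=\O(|z|^{N+1})$, and claim the remainder term is holomorphic on $\{\Re\lambda>c_{N}\}$ with $c_{N}\to-\infty$. That is false: $R_{N}$ is small only near the \emph{intersection} of the divisor components, not along each component. Near a point with $z_{1}=0$ but $z_{2}\neq 0$ one has $|z|\asymp 1$, so $R_{N}=\O(1)$ there and the integrand still behaves like $|z_{1}|^{2\lambda I_{1}-J_{1}-K_{1}}$; the remainder integral therefore converges only for $\Re\lambda>(J_{1}+K_{1}-2)/(2I_{1})$, a threshold that does not improve with $N$. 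Consequently neither the meromorphic continuation (a) nor the pole bookkeeping in (b), (c) is actually established for $k\geq 2$ as written.

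The fix is standard but must be stated: iterate one-variable expansions rather than expanding simultaneously. Expand $\eta$ in $(z_{1},\bar{z}_{1})$ with a remainder vanishing to order $N+1$ in $z_{1}$ alone (coefficients smooth in the remaining variables), then expand every resulting term in $(z_{2},\bar{z}_{2})$, and so on; each final term is then a product over $j$ of factors that are either monomials in $z_{j},\bar{z}_{j}$ or vanish to high order in $z_{j}$ by itself, so each one-dimensional integral separately either is one of your Mellin factors or is holomorphic on a large half-plane. With that decomposition your conclusion for (c) --- at most a simple pole at $0$ from each index $j$ with $J_{j}>0$ and $K_{j}>0$, hence order at most $\kappa_{x}(\omega)\leq\kappa(\omega)$ --- goes through. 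Note also that the paper's method has the side benefit of producing the explicit local formula of Proposition \ref{PropLocal}, on which the rest of the paper (independence of $s$ and the metric, Theorems \ref{thmResidue}, \ref{thmAeppli}, \ref{thmAeppli2}) depends; your route would require extra work to recover that.
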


To prove Theorem \ref{thmPrinc} we need the following lemma, the proof of which is a simple exercise.

\begin{lemma}\label{lemmaMulti}
For $\lambda\in\C$ and multi-indices $I,J,K$ such that if $I_{j}=0$ then $J_{j}=0$ and $K_{j}=0$ we have
\[\frac{|z^{I}|^{2\lambda}}{z^{J}\bar{z}^{K}}=\frac{h(\lambda)}{\lambda^{p}}\frac{\partial^{J+K}|z^{I}|^{2\lambda}}{\partial z^{J}\partial\bar{z}^{K}}\]
where
\[h(\lambda)=\Big(\prod_{J_{j}\neq0}I_{j}(\lambda I_{j}-1)\cdots(\lambda I_{j}-J_{j}+1)\Big)^{-1}\Big(\prod_{K_{j}\neq0}I_{j}(\lambda I_{j}-1)\cdots(\lambda I_{j}-K_{j}+1)\Big)^{-1}\]
and $p=\#\{j:J_{j}\neq0\}+\#\{j:K_{j}\neq0\}$.
\end{lemma}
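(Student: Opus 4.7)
My plan is to prove the identity by a direct computation, exploiting that both sides factor as a product over the index $j$. The hypothesis $I_j=0\Rightarrow J_j=K_j=0$ guarantees that whenever we differentiate $J_j$ times in $z_j$ or $K_j$ times in $\bar z_j$ the corresponding exponent $\lambda I_j$ is not identically zero, so the elementary derivative formula makes sense and the factors $h(\lambda)$ are not vacuous when they appear.

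First I would write
\[|z^{I}|^{2\lambda}=\prod_{j}z_{j}^{\lambda I_{j}}\bar z_{j}^{\lambda I_{j}},\]
so that both the mixed partial derivative and the quotient $|z^{I}|^{2\lambda}/(z^{J}\bar z^{K})$ factor into a product of single-variable expressions in $z_{j}$ and $\bar z_{j}$. This reduces the problem to a one-variable computation for each index $j$.

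Next I would compute the elementary one-variable derivatives. For each $j$ with $J_{j}\neq0$ one has
\[\frac{\partial^{J_{j}}}{\partial z_{j}^{J_{j}}}z_{j}^{\lambda I_{j}}=\lambda I_{j}(\lambda I_{j}-1)\cdots(\lambda I_{j}-J_{j}+1)\,z_{j}^{\lambda I_{j}-J_{j}},\]
and I would pull out one factor of $\lambda$ from the leading term $\lambda I_{j}$. An analogous identity holds for the $\bar z_{j}$-derivative with $K_{j}$ in place of $J_{j}$. For indices $j$ with $J_{j}=0$ (respectively $K_{j}=0$) the factor $z_{j}^{\lambda I_{j}}$ (respectively $\bar z_{j}^{\lambda I_{j}}$) is untouched.

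Finally I would multiply these one-variable identities back together. Counting the factors of $\lambda$ pulled out gives exactly $p=\#\{j:J_{j}\neq0\}+\#\{j:K_{j}\neq0\}$, and collecting the remaining products of falling factors reproduces $h(\lambda)^{-1}$ as defined in the statement. The surviving monomial part is $z^{\lambda I-J}\bar z^{\lambda I-K}=|z^{I}|^{2\lambda}/(z^{J}\bar z^{K})$, so rearranging yields the claimed identity. There is no real obstacle here beyond careful bookkeeping of which $j$ contribute a factor of $\lambda$ and which contribute to $h(\lambda)$; this is why the author calls the proof a simple exercise.
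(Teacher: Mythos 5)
Your computation is correct and is precisely the ``simple exercise'' the paper leaves to the reader: factor $|z^{I}|^{2\lambda}=\prod_{j}z_{j}^{\lambda I_{j}}\bar z_{j}^{\lambda I_{j}}$, apply the one-variable falling-factorial derivative formula, and extract one factor of $\lambda$ from each leading term $\lambda I_{j}$, which accounts for $\lambda^{p}$ and for $h(\lambda)^{-1}$. You also correctly identify where the hypothesis $I_{j}=0\Rightarrow J_{j}=K_{j}=0$ enters, so nothing is missing.
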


Notice that this means that $h(\lambda)$ has poles in
\[\lambda=\frac{1}{I_{j}},\frac{2}{I_{j}},\dots,\frac{J_{j}-1}{I_{j}}\quad\text{for }j\text{ with } J_{j}>1\]
and
\[\lambda=\frac{1}{I_{j}},\frac{2}{I_{j}},\dots,\frac{K_{j}-1}{I_{j}}\quad\text{for }j\text{ with } K_{j}>1.\]


\begin{proof}[Proof of Theorem \ref{thmPrinc}]
We may suppose that $\xi$ has support in a coordinate chart and so we study the integral over, say, a polydisc $\Delta\subset\C^{d}$. Since $D$ has normal crossings we may find coordinates so that the section $s$ is a monomial, say $s=z^{I}=z_{1}^{I_{1}}\cdots z_{d}^{I_{d}}$ and we write the metric as $|\cdot|=|\cdot|\e^{-\phi}$ for some function $\phi$. Furthermore, we write
\[\omega\wedge\xi=\frac{\psi}{z^{J}\bar{z}^{K}}\d z\wedge\d\bar{z}\]
where $\d z=\d z_{1}\wedge\dots\wedge\d z_{d}$ and $\psi$ is some smooth function with support in $\Delta$. The integral in (\ref{defF}) may now be written
\begin{equation}\label{defF2}
F_{\xi}(\lambda)=o_{\omega}(s)\int_{\Delta}\frac{|z^{I}|^{2\lambda}}{z^{J}\bar{z}^{K}}\e^{-2\lambda\phi}\psi\,\d z\wedge\d\bar{z}.
\end{equation}
We now prove (a). For integers $N\geq0$ we can use Lemma \ref{lemmaMulti} and Stokes' theorem to simplify the integral in (\ref{defF2}) as
\begin{align*}
F_{\xi}(\lambda)&=o_{\omega}(s)\int_{\Delta}\frac{|z^{I}|^{2\lambda+2N}}{z^{J+NI}\bar{z}^{K+NI}}\e^{-2\lambda\phi}\psi\,\d z\wedge\d\bar{z}\\
&=\frac{o_{\omega}(s)h(\lambda)}{\lambda^{p_{N}}}\int_{\Delta}\frac{\partial^{J+K+2NI}|z^{I}|^{2\lambda+2N}}{\partial z^{J+NI}\partial\bar{z}^{K+NI}}\e^{-2\lambda\phi}\psi\,\d z\wedge\d\bar{z}\\
&=\frac{(-1)^{|J+NI|+|K+NI|}o_{\omega}(s)h(\lambda)}{\lambda^{p_{N}}}\int_{\Delta}|z^{I}|^{2\lambda+2N}\frac{\partial^{J+K+2NI}}{\partial z^{J+NI}\partial\bar{z}^{K+NI}}\big(\e^{-2\lambda\phi}\psi\big)\d z\wedge\d\bar{z}.
\end{align*}
The last integral in the above expression is holomorphic in $\mathrm{Re}\,\lambda>-N-\epsilon$ for some $\epsilon>0$. Furthermore, the function $h$, which is given by Lemma \ref{lemmaMulti} but here depends on $N$, is meromorphic in $\C$. Hence $F_{\xi}$ has a meromorphic extension to $\C$, as $N$ may be chosen arbitrarily large, and we have proven (a).

Now let us prove (b). The fact that the poles are located at rational numbers follows from the proof of (a) and Lemma \ref{lemmaMulti} which describes the locations of the poles of $h$.

Finally we prove (c). Choosing $N=0$ gives
\begin{align}\label{eqF}
F_{\xi}(\lambda)&=\frac{(-1)^{|J|+|K|}o_{\omega}(s)h(\lambda)}{\lambda^{p}}\int_{\Delta}|z^{I}|^{2\lambda}\frac{\partial^{J+K}}{\partial z^{J}\partial\bar{z}^{K}}\big(\e^{-2\lambda\phi}\psi\big)\d z\wedge\d\bar{z}.
\end{align}
Notice that Lemma \ref{lemmaMulti} in particular gives that $h$ does not have a pole at $0$. We define a function $g$ from the integral above by
\[g(\lambda)=\int_{\Delta}|z^{I}|^{2\lambda}\frac{\partial^{J+K}}{\partial z^{J}\partial\bar{z}^{K}}\big(\e^{-2\lambda\phi}\psi\big)\d z\wedge\d\bar{z}.\]
Then $g$ is holomorphic in $\mathrm{Re}\,\lambda>-\epsilon$ for some $\epsilon$. To show that $F_{\xi}$ has a pole of order $\kappa$ we need to show that $g$ has a zero of order $p-\kappa$ at the origin. We have that
\[p-\kappa=\#\{j:J_{j}\neq0\text{ or }K_{j}\neq0\}=\#\{j:I_{j}\neq0\}.\]
Repeated use of the product rule for derivatives gives
\begin{equation}\label{egG}
g^{(k)}(0)=\sum_{\ell=0}^{k}{{k}\choose{\ell}}(-2)^{k-\ell}\int_{\Delta}\big(\log|z^{I}|^{2}\big)^{\ell}\frac{\partial^{J+K}}{\partial z^{J}\partial\bar{z}^{K}}\big(\psi\phi^{k-\ell}\big)\d z\wedge\d\bar{z}
\end{equation}
and using the multinomial theorem we get
\begin{align}\label{egG2}
&\int_{\Delta}\big(\log|z^{I}|^{2}\big)^{\ell}\frac{\partial^{J+K}}{\partial z^{J}\partial\bar{z}^{K}}\big(\psi\phi^{k-\ell}\big)\d z\wedge\d\bar{z}\nonumber\\
&=\sum_{M}{{\ell}\choose{M}}\int_{\Delta}\prod_{j=1}^{d}\big(I_{j}\log|z_{j}|^{2}\big)^{M_{j}}\frac{\partial^{J+K}}{\partial z^{J}\partial\bar{z}^{K}}\big(\psi\phi^{k-\ell}\big)\d z\wedge\d\bar{z}.
\end{align}
The sum is over multi-indices $M=(M_{1},\dots,M_{d})$ such that $I_{j}=0$ implies that $M_{j}=0$, all $M_{j}\geq0$ and $\sum_{j}M_{j}=\ell$. Thus we have to study integrals of the form
\begin{align}\label{eqInt2}
\int_{\Delta}\prod_{j=1}^{d}\big(I_{j}\log|z_{j}|^{2}\big)^{M_{j}}\frac{\partial^{J+K}}{\partial z^{J}\partial\bar{z}^{K}}\big(\psi\phi^{k-\ell}\big)\d z\wedge\d\bar{z}.
\end{align}
Suppose first that $I_{1}\neq0$ but $M_{1}=0$. Then the integral in (\ref{eqInt2}) may be written
\begin{align*}
\int_{\Delta'}\prod_{j=2}^{d}\big(I_{j}\log|z_{j}|^{2}\big)^{M_{j}}\bigg(\int_{\Delta_{1}}\frac{\partial^{J+K}}{\partial z^{J}\partial\bar{z}^{K}}\big(\psi\phi^{k-\ell}\big)\d z_{1}\wedge\d\bar{z}_{1}\bigg)\d z'\wedge\d\bar{z}'
\end{align*}
where $\Delta=\Delta_{1}\times\Delta'$. But since $I_{j}\neq0$ implies that $J_{1}\neq0$ or $K_{1}\neq0$ the inner integral vanishes using Stokes' theorem. Hence we get the following: \[\text{\emph{if $I_{j}\neq0$ but $M_{j}=0$ then the integral in (\ref{eqInt2}) vanishes.}}\]
Now we suppose $k<p-\kappa$ and we want to show that $g^{(k)}(0)=0$. From (\ref{egG}) and (\ref{egG2}) we know that $g^{(k)}(0)$ is a sum of integrals as in (\ref{eqInt2}). For each of these integrals there are an integer $\ell$ and a multi-index $M$ such that
\[\sum M_{j}=\ell<p-\kappa=\#\{j:I_{j}\neq0\}.\]
Hence, for each of the integrals, there is some $j$ so that $I_{j}\neq0$ but $M_{j}=0$. Then, as explained above, all of the integrals are zero and thus $g^{(k)}(0)=0$ for $k<p-\kappa$. Therefore $g$ has a zero of order $p-\kappa$ at the origin which was what we wanted to prove.
\end{proof}

We use Theorem \ref{thmPrinc} (c) to make the following definition.

\begin{definition}\label{defCanonicalCurrent}
For $\omega\in\E(*\bar{*}D)$, where $D$ has normal crossings, we define the \emph{canonical current $\{\omega\}$ associated to $\omega$} by
\[\big\langle\{\omega\},\xi\big\rangle=\lambda^{\kappa(\omega)}F_{\xi}(\lambda)\Big|_{\lambda=0}.\qedhere\]
\end{definition}

A priori $\{\omega\}$ depends on choice of $s$ and $|\cdot|$. Corollary \ref{corIndep}, however, shows that this is not the case.

\begin{remark}
In the case that $\omega$ is semi-meromorphic $\{\omega\}$ is the principal value current of $\omega$ since then $\kappa(\omega)=0$ and $o_{\omega}(s)=1$.
\end{remark}

\subsection{Local calculations} 

We will make some calculations of canonical currents associated to quasi-meromorphic forms to hopefully clarify but also to show that they can behave a bit odd. Given a multi-index $J=(J_{1},\dots,J_{d})$ we write $1_{J}$ for the multi-index given by $(1_{J})_{j}=0$ if $J_{j}=0$ and $(1_{J})_{j}=1$ if $J_{j}\neq0$. We begin with a proposition.

\begin{proposition}\label{PropLocal}
For $\omega\in\QM(\C^{d})$ and a test function $\xi$ in $\C^{d}$ with support in $\Delta$ such that $\omega\wedge\xi=(\psi/z^{J}\bar{z}^{K})\d z\wedge\d\bar{z}$ we have
\[\big\langle\{\omega\},\xi\big\rangle=\frac{(-1)^{p}}{(J-1_{J})!(K-1_{K})!}\int_{\Delta}\Big(\prod_{j:J_{j}+K_{j}\neq0}\log|z_{j}|^{2}\Big)\frac{\partial^{J+K}\psi}{\partial z^{J}\partial\bar{z}^{K}}\d z\wedge\d\bar{z}\]
where $p$ is given by Lemma \ref{lemmaMulti}.
\end{proposition}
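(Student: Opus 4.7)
The plan is to reduce everything to the formula already derived in the proof of Theorem~\ref{thmPrinc}, by making a convenient local choice of section and metric, and then to identify the coefficient of $\lambda^{-\kappa}$ in the resulting Laurent expansion. Because $\xi$ is supported in a single polydisc $\Delta$ and, by (the forthcoming) Corollary~\ref{corIndep}, the canonical current does not depend on the choice of $s$ or $|\cdot|$, I may take $|\cdot|$ to be trivial (so $\phi\equiv 0$) and $s=z^{I}$ with $I_{j}=1$ if $J_{j}+K_{j}\neq 0$ and $I_{j}=0$ otherwise. With this choice $o_{\omega}(s)=1$, since the factors in (\ref{eqOrder}) are the $I_{j}$ for which both $J_{j}\neq 0$ and $K_{j}\neq 0$, and each such $I_{j}$ equals one.

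Plugging these choices into equation (\ref{eqF}) from the proof of Theorem~\ref{thmPrinc} gives
\[
F_{\xi}(\lambda)=\frac{(-1)^{|J|+|K|}h(\lambda)}{\lambda^{p}}g(\lambda),\qquad g(\lambda)=\int_{\Delta}|z^{I}|^{2\lambda}\frac{\partial^{J+K}\psi}{\partial z^{J}\partial\bar{z}^{K}}\,\d z\wedge\d\bar{z},
\]
since the differentiated factor $e^{-2\lambda\phi}$ is identically $1$. The next step is to evaluate $h(0)$: with $I_{j}\in\{0,1\}$, Lemma~\ref{lemmaMulti} collapses each product in $h(\lambda)$ at $\lambda=0$ to $1\cdot(-1)\cdot(-2)\cdots(-(J_{j}-1))=(-1)^{J_{j}-1}(J_{j}-1)!$, and similarly for the $K_{j}$ factors. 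Collecting signs and using $\sum_{J_{j}\neq 0}(J_{j}-1)+\sum_{K_{j}\neq 0}(K_{j}-1)=|J|+|K|-p$, one finds
\[
(-1)^{|J|+|K|}h(0)=\frac{(-1)^{p}}{(J-1_{J})!\,(K-1_{K})!}.
\]

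By Theorem~\ref{thmPrinc}(c) the function $g$ has a zero of order $p-\kappa$ at the origin, so
\[
\big\langle\{\omega\},\xi\big\rangle=\lambda^{\kappa}F_{\xi}(\lambda)\big|_{\lambda=0}=\frac{(-1)^{p}}{(J-1_{J})!\,(K-1_{K})!}\cdot\frac{g^{(p-\kappa)}(0)}{(p-\kappa)!},
\]
and only the computation of the coefficient $g^{(p-\kappa)}(0)/(p-\kappa)!$ remains. Differentiating $|z^{I}|^{2\lambda}$ in $\lambda$ brings down factors $\log|z^{I}|^{2}=\sum_{j:I_{j}\neq 0}\log|z_{j}|^{2}$, and the multinomial theorem expands $(\log|z^{I}|^{2})^{p-\kappa}$ exactly as in (\ref{egG2}). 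By the Stokes-type vanishing observation in the proof of Theorem~\ref{thmPrinc} (any multi-index $M$ with some $I_{j}\neq 0$ but $M_{j}=0$ yields a vanishing integral), the only surviving multi-index is $M_{j}=1$ for $I_{j}\neq 0$ and $M_{j}=0$ otherwise. The multinomial coefficient cancels the $1/(p-\kappa)!$, producing exactly
\[
\int_{\Delta}\Big(\prod_{j:J_{j}+K_{j}\neq 0}\log|z_{j}|^{2}\Big)\frac{\partial^{J+K}\psi}{\partial z^{J}\partial\bar{z}^{K}}\,\d z\wedge\d\bar{z},
\]
which combined with the constant above yields the claim.

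The only genuinely delicate part is the sign/factorial bookkeeping in $h(0)$; the Taylor-coefficient extraction and the vanishing of spurious multi-indices are both immediate consequences of arguments already in place in the proof of Theorem~\ref{thmPrinc}. The interchange of $\partial_{\lambda}^{p-\kappa}$ with the integral is legitimate because $g$ is holomorphic near $0$ (shown in that same proof), so no additional analytic justification is required.
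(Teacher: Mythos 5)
Your computation is carried out correctly and follows the same mechanism as the paper's proof (specialise equation (\ref{eqF}), evaluate $h(0)$, extract $g^{(p-\kappa)}(0)/(p-\kappa)!$ via the multinomial expansion and the Stokes-type vanishing). But there is a genuine logical gap at the very first step: you invoke Corollary \ref{corIndep} to reduce to the trivial metric and to the reduced section $s=z^{I}$ with $I_{j}\in\{0,1\}$. In the paper, Corollary \ref{corIndep} is \emph{deduced from} Proposition \ref{PropLocal} -- the definition of $\{\omega\}$ a priori depends on $s$ and $|\cdot|$, and independence is established precisely because the right-hand side of the proposition contains neither. So your argument is circular as written: it proves the identity only for one particular choice of section and metric, which is not enough to establish the proposition (whose content is that the same formula holds for \emph{every} admissible choice), and the corollary you lean on is not yet available.

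The repair is exactly what the paper does: keep a general $s=z^{I}$ and a general weight $\phi$ throughout. The $\phi$-dependence disappears because in (\ref{egG}) only the term $\ell=k=p-\kappa$ survives the vanishing argument, so the factor $\phi^{k-\ell}$ is identically $1$; and the powers of $I_{j}$ coming from the multinomial expansion of $\big(\log|z^{I}|^{2}\big)^{p-\kappa}$, namely $\prod_{j:I_{j}\neq0}I_{j}$, cancel against the factors $\big(\prod_{j:J_{j}\neq0}I_{j}\big)^{-1}\big(\prod_{j:K_{j}\neq0}I_{j}\big)^{-1}$ in $h(0)$ together with $o_{\omega}(s)=\prod_{j:J_{j}\neq0,\,K_{j}\neq0}I_{j}$. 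This last cancellation is the whole reason the normalising factor $o_{\omega}(s)$ appears in the definition of $F_{\xi}$ (cf.\ the remark after Corollary \ref{corIndep}), and your normalisation $I_{j}=1$ hides it entirely. Everything else in your write-up -- the sign and factorial bookkeeping for $h(0)$ and the identification of the surviving multi-index $M$ -- is correct.
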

\begin{proof}
From the proof of Theorem \ref{thmPrinc} we know
\[\big\langle\{\omega\},\xi\big\rangle=\lambda^{\kappa(\omega)}F_{\xi}(\lambda)\Big|_{\lambda=0}=\frac{o_{\omega}(s)(-1)^{|J|+|K|}}{(p-\kappa(\omega))!}h(0)g^{(p-\kappa(\omega))}(0)\]
and Lemma \ref{lemmaMulti} gives 
\[h(0)=\frac{(-1)^{|J|+|K|-p}}{(J-1_{J})!(K-1_{K})!}\Big(\prod_{j:J_{j}\neq0}I_{j}\Big)^{-1}\Big(\prod_{j:K_{j}\neq0}I_{j}\Big)^{-1}.\]
The equation (\ref{egG}) gives an expression for $g^{(p-\kappa(\omega))}(0)$
in terms of the integrals in (\ref{egG2}). But just as in the proof of Theorem \ref{thmPrinc} these integrals vanish if $\ell<p-\kappa(\omega)$. For $\ell=p-\kappa(\omega)$ we must have all $M_{j}=1$ for the integral not to vanish. Using this for $k=p-\kappa(\omega)$ we get
\[g^{(p-\kappa(\omega))}(0)=\Big(\prod_{j:I_{j}\neq0}I_{j}\Big)(p-\kappa(\omega))!\int_{\Delta}\Big(\prod_{j:I_{j}\neq0}\log|z_{j}|^{2}\Big)\frac{\partial^{J+K}\psi}{\partial z^{J}\partial\bar{z}^{K}}\d z\wedge\d\bar{z}.\]
This is the same integral as in the statement of the proposition. We only need to see what constant we get in front of it. This constant is
\begin{align*}
o_{\omega}(s)\frac{(-1)^{p}}{(J-1_{J})!(K-1_{K})!}\Big(\prod_{j:I_{j}\neq0}I_{j}\Big)\Big(\prod_{j:J_{j}\neq0}I_{j}\Big)^{-1}\Big(\prod_{j:K_{j}\neq0}I_{j}\Big)^{-1}
\end{align*}
but since $o_{\omega}(s)=\prod_{j:J_{j}\neq0,K_{j}\neq0}I_{j}$ this is precisely what is claimed.
\end{proof}

\begin{corollary}\label{corIndep}
The canonical current $\{\omega\}$ does not depend on the choice of section $s$ or metric $|\cdot|$.
\end{corollary}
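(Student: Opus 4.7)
The plan is to deduce the corollary directly from Proposition \ref{PropLocal}. The crucial observation is that the explicit formula
\[\big\langle\{\omega\},\xi\big\rangle=\frac{(-1)^{p}}{(J-1_{J})!(K-1_{K})!}\int_{\Delta}\Big(\prod_{j:J_{j}+K_{j}\neq0}\log|z_{j}|^{2}\Big)\frac{\partial^{J+K}\psi}{\partial z^{J}\partial\bar{z}^{K}}\,\d z\wedge\d\bar{z}\]
involves only the local data $(\psi,J,K)$ of $\omega\wedge\xi$ in coordinates adapted to the polar set $D$, with no reference whatsoever to $s$ or $|\cdot|$. Thus any two choices of section and metric must produce the same numerical value once we can bring each into a form to which Proposition \ref{PropLocal} applies.

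To make this global, fix a test form $\xi$ and a partition of unity $\{\rho_\alpha\}$ subordinate to a covering of $X$ by coordinate charts in which $D$ has the normal-crossing form $z_1\cdots z_k=0$. By linearity of the current pairing,
\[\big\langle\{\omega\},\xi\big\rangle=\sum_\alpha\big\langle\{\omega\},\rho_\alpha\xi\big\rangle,\]
so it is enough to show that each summand is independent of the choice of $(s,|\cdot|)$. In an adapted chart, any holomorphic section $s$ cutting out $D$ takes the form $s=z^I\varphi$ with $\varphi$ holomorphic and non-vanishing on $H(\omega)$; shrinking the chart if necessary so that $\varphi$ is non-vanishing throughout, we may absorb $|\varphi|^{2\lambda}$ together with the given metric weight $\e^{-2\lambda\phi_{0}}$ into a single effective smooth weight $\e^{-2\lambda\phi}$ and play the game of Proposition \ref{PropLocal}. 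The resulting value is the integral displayed above, determined entirely by $\omega\wedge\rho_\alpha\xi$.

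Since the right-hand side is the same for any two choices of section and metric, the summands $\langle\{\omega\},\rho_\alpha\xi\rangle$ agree for each $\alpha$, and summing yields the corollary. The only potential obstacle is the reduction step in the previous paragraph, i.e.\! verifying that an arbitrary pair $(s,|\cdot|)$ really can be put into the normal form used in the proofs of Theorem \ref{thmPrinc} and Proposition \ref{PropLocal}; this amounts to the standard device of absorbing a nonvanishing holomorphic unit into the smooth part of the metric, valid on any sufficiently small adapted chart.
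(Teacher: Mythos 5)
Your proof is correct and follows essentially the same route as the paper: Corollary \ref{corIndep} is deduced directly from Proposition \ref{PropLocal} by observing that its right-hand side depends only on the local data $(\psi,J,K)$ of $\omega\wedge\xi$ and not on $s$ or $|\cdot|$, the extra detail you supply (partition of unity, absorbing the non-vanishing holomorphic unit of $s$ into the metric weight) merely making explicit the localisation the paper leaves implicit.
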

\begin{proof}
This follows immediately from Proposition \ref{PropLocal} since the right hand side in that statement does not depend on the section $s$ or the metric $|\cdot|$, as $J$ and $K$ do not. Hence (locally and thus also globally) this holds for $\{\omega\}$.
\end{proof}

\begin{remark}
We would not get the above corollary if we did not have the factor $o_{\omega}(s)$ in the definition of $F_{\xi}$.
\end{remark}

When doing calculations we will get use of the following which is a consequence of Cauchy--Green's theorem: If $\psi$ is a smooth function with compact support in $\Delta\subset\C$ then
\begin{equation}\label{goodEq}
\psi(0)=-\frac{1}{2\pi\i}\int_{\Delta}\log|z|^{2}\frac{\partial^{2}\psi}{\partial z\partial\bar{z}}\d z\wedge\d\bar{z}.
\end{equation}

\begin{corollary}\label{corLocal}
For $\omega\in\QM(\C^{d})$ and a test function $\xi$ in $\C^{d}$ with support in $\Delta$ we have
\begin{description}
\item{(a)} if $\omega\wedge\xi=(\psi/z_{1}^{m}\bar{z}_{1}^{n})\d z\wedge\d\bar{z}$ then
\[\big\langle\{\omega\},\xi\big\rangle=-\frac{2\pi\i}{(m-1)!(n-1)!}\int_{\Delta\cap\{z_{1}=0\}}
\frac{\partial^{m+n-2}\psi}{\partial z_{1}^{m-1}\partial\bar{z}_{1}^{n-1}}\d z'\wedge\d\bar{z}',\]
\item{(b)} if $\omega\wedge\xi=(\psi/z_{1}^{J_{1}}\dots z_{k}^{J_{k}}\bar{z}_{1}\dots\bar{z}_{k})\d z\wedge\d\bar{z}$
\[\big\langle\{\omega\},\xi\big\rangle=\frac{(-2\pi\i)^{k}}{(J-1_{J})!}\int_{\Delta\cap\{z_{1}=\dots=z_{k}=0\}}\frac{\partial^{J-1_{J}}\psi}{\partial z^{J-1_{J}}}\d z''\wedge\d\bar{z}''\]
\end{description}
where $\d z'\wedge\d\bar{z}'=\d z_{2}\wedge\d\bar{z}_{2}\wedge\dots\wedge\d z_{d}\wedge\d\bar{z}_{d}$ and $\d z''\wedge\d\bar{z}''=\d z_{k+1}\wedge\d\bar{z}_{k+1}\wedge\dots\wedge\d z_{d}\wedge\d\bar{z}_{d}$.
\end{corollary}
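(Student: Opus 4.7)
The plan is to reduce both parts to Proposition \ref{PropLocal} by specialising the multi-indices $J$ and $K$, and then to eliminate the surviving logarithmic factors by iterating the Cauchy--Green identity (\ref{goodEq}) one variable at a time.

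For (a), take $J=(m,0,\dots,0)$ and $K=(n,0,\dots,0)$, so that $p=2$, $(J-1_J)!=(m-1)!$, $(K-1_K)!=(n-1)!$, and only $\log|z_1|^2$ survives in the product. Proposition \ref{PropLocal} then reads
\[
\big\langle\{\omega\},\xi\big\rangle=\frac{1}{(m-1)!(n-1)!}\int_{\Delta}\log|z_1|^2\,\frac{\partial^{m+n}\psi}{\partial z_1^{m}\partial\bar{z}_1^{n}}\,\d z\wedge\d\bar{z}.
\]
Set $\tilde\psi=\partial^{m+n-2}\psi/\partial z_1^{m-1}\partial\bar{z}_1^{n-1}$; this is smooth and compactly supported in $z_1$ with smooth dependence on $z'$, and the $z_1$-integrand factors as $\log|z_1|^2\cdot\partial^{2}\tilde\psi/\partial z_1\partial\bar{z}_1$. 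Applying (\ref{goodEq}) in $z_1$ for each fixed $z'$ gives $-2\pi\i\,\tilde\psi(0,z')$, and Fubini on the remaining variables produces (a).

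For (b), take $J=(J_1,\dots,J_k,0,\dots,0)$ and $K=(1,\dots,1,0,\dots,0)$ with $k$ ones, so that $p=2k$, $(K-1_K)!=1$, and the logarithmic factor is $\prod_{j=1}^{k}\log|z_j|^2$. Proposition \ref{PropLocal} yields
\[
\big\langle\{\omega\},\xi\big\rangle=\frac{1}{(J-1_J)!}\int_{\Delta}\prod_{j=1}^{k}\log|z_j|^2\cdot\frac{\partial^{J+K}\psi}{\partial z^{J}\partial\bar{z}^{K}}\,\d z\wedge\d\bar{z}.
\]
Setting $\tilde\psi=\partial^{J-1_J}\psi/\partial z^{J-1_J}$ rewrites the mixed derivative as $\prod_{j=1}^{k}\partial_{z_j}\partial_{\bar{z}_j}\tilde\psi$. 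Iterating (\ref{goodEq}) in $z_1,\dots,z_k$ --- each application is valid because the partially integrated functions remain smooth and compactly supported in the remaining variables --- peels off a factor $-2\pi\i$ per step and sets the corresponding variable to zero, leaving $(-2\pi\i)^{k}\tilde\psi(0,\dots,0,z'')$ inside an integral over $z''$, which is exactly (b).

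The only real obstacle is routine bookkeeping: tracking the constants produced by Proposition \ref{PropLocal} and the orientation signs that arise when $\d z\wedge\d\bar{z}$ is rearranged as $(\d z_1\wedge\d\bar{z}_1)\wedge\cdots\wedge(\d z_k\wedge\d\bar{z}_k)\wedge\d z''\wedge\d\bar{z}''$ so that (\ref{goodEq}) can be invoked in one coordinate at a time. These reorderings are identical on both sides of the identities involved, so the resulting signs cancel out.
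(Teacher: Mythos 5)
Your proof is correct and follows exactly the route the paper takes: the paper's entire proof of this corollary is the one-line remark that it follows from Proposition \ref{PropLocal} and (\ref{goodEq}), and your specialisation of the multi-indices $J$ and $K$ followed by the iterated application of the Cauchy--Green identity, one variable at a time with Fubini, is precisely the intended filling-in of that remark. The one point you treat breezily --- the orientation sign from reordering $\d z\wedge\d\bar z$ into $(\d z_{1}\wedge\d\bar{z}_{1})\wedge\d z'\wedge\d\bar{z}'$ --- rests on the same implicit convention the paper itself uses, so it is not a genuine gap.
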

\begin{proof}
This follows from Proposition \ref{PropLocal} and (\ref{goodEq}).
\end{proof}

We now use Corollary \ref{corLocal} to make some explicit calculations.

\begin{example}
Let $X=\CP^{1}$ with homogeneous coordinates $[z:w]$ and let $0$ be the point where $z=0$ and $\infty$ the point where $w=0$. We let
\[\omega=\frac{\d z\wedge\d\bar{z}}{z\bar{z}}=\frac{\d w\wedge\d\bar{w}}{w\bar{w}}\quad\text{for }zw\neq0,\]
which means that $\kappa(\omega)=1$. In view of Corollary \ref{corLocal} (a), given a test function $\xi$, we get
\begin{align*}
\big\langle\{\omega\},\xi\big\rangle&=-2\pi\i\xi(0)-2\pi\i\xi(\infty).
\end{align*}
On the other hand, if $X=U$ for some open set $U\subset\CP^{1}$ which does not contain the origin or $\infty$ then $\kappa(\omega)=0$ and therefore
\begin{align*}
\big\langle\{\omega\},\xi\big\rangle&=\int_{U}\frac{\xi(z)}{|z|^{2}}\d z\wedge\d\bar{z}.\qedhere
\end{align*}
\end{example}

\begin{remark}
The above example shows that for canonical currents we have the following property: in general $\chi\{\omega\}\neq\{\chi\omega\}$ for a smooth function $\chi$. This means that when we define the canonical current associated to a form $\omega$ it is important to decide on what underlying space we consider it.
\end{remark}

\begin{example}
If we let $X=\C$ and apply Corollary \ref{corLocal} with $\omega=1/(z^{m}\bar{z}^{n})$ then we get that
\[z\bigg\{\frac{1}{z^{m}\bar{z}^{n}}\bigg\}=\bigg\{\frac{1}{z^{m-1}\bar{z}^{n}}\bigg\}\quad\text{and}\quad\bar{z}\bigg\{\frac{1}{z^{m}\bar{z}^{n}}\bigg\}=\bigg\{\frac{1}{z^{m}\bar{z}^{n-1}}\bigg\}\]
for $m,n\geq2$. On the other hand
\[z^{m}\bigg\{\frac{1}{z^{m}\bar{z}^{n}}\bigg\}=0\quad\text{and}\quad\bar{z}^{n}\bigg\{\frac{1}{z^{m}\bar{z}^{n}}\bigg\}=0\]
for $m,n\geq1$.
\end{example}

Theorem \ref{thmPrinc} (b) gives some insight about the poles of $F_{\xi}$ but the following proposition gives more information.

\begin{proposition}\label{propPoles}
The poles of the function $F_{\xi}$ are located at rational numbers less than or equal to
\[\max\bigg\{\min\Big\{\frac{J_{j}-1}{I_{j}},\frac{K_{j}-1}{I_{j}}\Big\}:j=1,\dots,d\bigg\}.\]
\end{proposition}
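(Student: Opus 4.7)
The plan is to localize the computation as in the proof of Theorem \ref{thmPrinc} and, for $\mathrm{Re}\,\lambda\gg 1$, pass to polar coordinates $z_{j}=r_{j}\e^{\i\theta_{j}}$ and carry out the angular integrations first. On a chart where $s=z^{I}$ we have
\[F_{\xi}(\lambda)=o_{\omega}(s)\int_{\Delta}\frac{|z^{I}|^{2\lambda}}{z^{J}\bar z^{K}}\e^{-2\lambda\phi}\psi\,\d z\wedge\d\bar z.\]
Using $z_{j}^{\lambda I_{j}-J_{j}}\bar z_{j}^{\lambda I_{j}-K_{j}}=r_{j}^{2\lambda I_{j}-J_{j}-K_{j}}\e^{\i(K_{j}-J_{j})\theta_{j}}$ and $\d z_{j}\wedge\d\bar z_{j}=-2\i\,r_{j}\,\d r_{j}\,\d\theta_{j}$, Fubini (valid for large $\mathrm{Re}\,\lambda$) gives
\[F_{\xi}(\lambda)=o_{\omega}(s)(-2\i)^{d}\int\prod_{j}r_{j}^{2\lambda I_{j}-J_{j}-K_{j}+1}\,f(r,\lambda)\,\d r,\qquad f(r,\lambda)=\int_{[0,2\pi]^{d}}\prod_{j}\e^{\i(K_{j}-J_{j})\theta_{j}}\e^{-2\lambda\phi}\psi\,\d\theta.\]

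The central step is to show that $f(r,\lambda)=\prod_{j}r_{j}^{|K_{j}-J_{j}|}\widetilde f(r,\lambda)$ for some $\widetilde f$ that is smooth up to the coordinate hyperplanes $\{r_{j}=0\}$ and holomorphic in $\lambda$. This rests on the standard fact that for a smooth function $g(z_{j},\bar z_{j},w)$, the integral $\int_{0}^{2\pi}\e^{\i k\theta_{j}}g(r_{j}\e^{\i\theta_{j}},r_{j}\e^{-\i\theta_{j}},w)\,\d\theta_{j}$ equals $r_{j}^{|k|}$ times a function of $r_{j}\geq 0$ and $w$ which is smooth up to $r_{j}=0$: in the Taylor expansion of $g$ in $(z_{j},\bar z_{j})$ only the monomials $z_{j}^{m}\bar z_{j}^{n}=r_{j}^{m+n}\e^{\i(m-n)\theta_{j}}$ with $m-n=-k$ survive the $\theta_{j}$-integration, and these satisfy $m+n\geq|k|$; the Taylor remainder contributes a term of order $O(r_{j}^{N+1})$ for arbitrary $N$, which upgrades the observation from a statement about formal series to one about genuinely smooth functions. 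Applying this successively for $j=1,\dots,d$ to $g=\e^{-2\lambda\phi}\psi$ produces the claimed factorization.

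Using $-J_{j}-K_{j}+|K_{j}-J_{j}|=-2\min(J_{j},K_{j})$ and substituting back yields
\[F_{\xi}(\lambda)=o_{\omega}(s)(-2\i)^{d}\int\prod_{j}r_{j}^{2\lambda I_{j}-2\min(J_{j},K_{j})+1}\,\widetilde f(r,\lambda)\,\d r,\]
whose integrand is absolutely integrable exactly when $\mathrm{Re}\,\lambda>(\min(J_{j},K_{j})-1)/I_{j}$ for every $j$, i.e.\ on the half-plane $\mathrm{Re}\,\lambda>\max_{j}\min\{(J_{j}-1)/I_{j},(K_{j}-1)/I_{j}\}$. The right-hand side therefore defines a holomorphic function of $\lambda$ there, and by Theorem \ref{thmPrinc}(a) together with the uniqueness of meromorphic continuation it must coincide with $F_{\xi}$ on this half-plane. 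Combined with Theorem \ref{thmPrinc}(b) this gives the proposition. The one point requiring care is the smoothness of $\widetilde f$ up to $\{r=0\}$, which must be obtained from the Taylor-with-remainder version of the angular vanishing rather than by formal series manipulations; the verification is routine but not purely algebraic.
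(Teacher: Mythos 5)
Your argument is correct, but it takes a genuinely different route from the paper's. The paper first treats the case where all anti-holomorphic (or, symmetrically, all holomorphic) pole orders are at most one: integrating by parts only in those variables produces an $h$ with no poles at all, and the remaining integral is the analytic continuation defining the principal value current of $1/z^{J}$, which is invoked as known to be pole-free in $\mathrm{Re}\,\lambda>0$ and at the origin. The general case is then reduced to this one by the shift $\mu=\lambda-M$, absorbing $|z^{I}|^{2M}=z^{MI}\bar{z}^{MI}$ into the denominator. You instead pass to polar coordinates and use the angular averaging to extract the factor $\prod_{j}r_{j}^{|K_{j}-J_{j}|}$, turning the condition for absolute convergence of the radial integral directly into the stated threshold via $-J_{j}-K_{j}+|K_{j}-J_{j}|=-2\min(J_{j},K_{j})$. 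Your route is self-contained: it effectively reproves, rather than cites, the pole-freeness of the principal value continuation (that fact is itself usually established by exactly this polar-coordinate computation). It also delivers the bound $\max_{j}\min\{(J_{j}-1)/I_{j},(K_{j}-1)/I_{j}\}$ exactly, whereas the paper's shift argument needs $MI$ to be an integer multi-index and so, read literally, only bounds the poles by the smallest admissible integer $M$; the sharper rational bound must then be recovered from the explicit pole locations of $h$ in Lemma \ref{lemmaMulti}. The one technical point you flag, the regularity of $\widetilde{f}$ up to $\{r_{j}=0\}$ locally uniformly in $\lambda$ via Taylor expansion with remainder, is standard and unproblematic; continuity with locally uniform bounds already suffices for the convergence and holomorphy you need.
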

\begin{proof}
First suppose $K_{i}=0$ or $K_{i}=1$ for all $i=1,\dots,d$. We may assume that $\xi$ has support in a local chart and so we can write down the integral locally as
\begin{align*}
F_{\xi}(\lambda)&=o_{\omega}(s)\int_{\Delta}\frac{|z^{I}|^{2\lambda}}{z^{J}\bar{z}^{K}}\e^{-2\lambda\phi}\psi\d z\wedge\d\bar{z}\\
&=\frac{h(\lambda)}{\lambda^{p}}\int_{\Delta}\frac{\partial^{K}|z^{I}|^{2\lambda}}{\partial\bar{z}^{K}}\frac{1}{z^{J}}\e^{-2\lambda\phi}\psi\d z\wedge\d\bar{z}\\
&=\frac{(-1)^{|K|}h(\lambda)}{\lambda^{p}}\int_{\Delta}\frac{|z^{I}|^{2\lambda}}{z^{J}}\frac{\partial^{K}\e^{-2\lambda\phi}\psi}{\partial\bar{z}^{K}}\d z\wedge\d\bar{z}.
\end{align*}
We made a similar computation in the proof of Theorem \ref{thmPrinc}, cf.\! Lemma \ref{lemmaMulti}, but now we only considered the anti-holomorphic derivatives. Since these are of at most order one the function $h$ will not have any poles at all, see Lemma \ref{lemmaMulti}. But the integral in the last expression above is the principal value current of $1\big/z^{J}$ acting on $\frac{\partial^{K}\e^{-2\lambda\phi}\psi}{\partial\bar{z}^{K}}\d z\wedge\d\bar{z}$. This is known not to have any poles in the right half plane (and not at the origin). Hence $F_{\xi}$ does not have any poles in $\mathrm{Re}(\lambda)>0$.

Note that the above result would also hold as long as $J_{i}\leq1$ or $K_{i}\leq1$ for all $i$. Now suppose we are in the general case. Let $\mu=\lambda-M$ for some integer $M$. Then
\[\frac{|z^{I}|^{2\lambda}}{z^{J}\bar{z}^{K}}=\frac{|z^{I}|^{2\mu+2M}}{z^{J}\bar{z}^{K}}=|z^{I}|^{2\mu}\frac{z^{MI}\bar{z}^{MI}}{z^{J}\bar{z}^{K}}\]
and choosing $M$ so that $MI_{i}\geq J_{i}-1$ or $MI_{i}\geq K_{i}-1$ for each $i$ we get from the above that $F_{\xi}$ has no poles in $\mathrm{Re}(\mu)>0$. That is, $F_{\xi}$ has no poles in $\mathrm{Re}(\lambda)>M$. Choosing $M$ so that this holds we get the proposition.
\end{proof}

One can note that by choosing higher powers $I$ of the section $s$ we can get the poles in the right half-plane arbitrarily close to the origin. Suppose $\omega=\alpha\wedge\bar{\beta}$ for semi-meromorphic forms $\alpha$ and $\beta$. Proposition \ref{propPoles} gives us a hint that the situation is a bit more well behaved when $\beta$ only has poles of order one since then the proposition says that $F_{\xi}$ does not have poles in the right half plane.

\section{Cohomological residues}

We will discuss the classical Leray residue, the conjugate Dolbeault residue and then define a residue for the Aeppli cohomology. Now $X$ is assumed to be a compact complex manifold.

\subsection{The conjugate Dolbeault residue}

To define residues the classical setting is the following: suppose $D$ is a smooth hypersurface and $\alpha$ a $\d$-closed form in $X\setminus D$ with a holomorphic pole of order one along $D$. If $z_{1}=0$ is a local equation for $D$ then $\alpha$ may locally be written as
\[\alpha=\frac{\d z_{1}}{z_{1}}\wedge\widetilde{\alpha}+\tau\]
for some forms $\widetilde{\alpha}$ and $\tau$ such that $\tau$ does not contain $\d z_{1}$. Certainly $\widetilde{\alpha}$ is smooth but it is well known that the closedness implies that $\tau$ is smooth. One defines the \emph{Poincaré residue} by $\Res(\alpha)=\widetilde{\alpha}\big|_{D}$. It is easy to check that this gives a well defined closed form on $D$. If $\alpha$ is any closed form on $X\setminus D$ then there is a cohomologous form $\alpha'$ with a pole of order one along $D$, cf.\! \cite[Thm. 6.3.3, p.~233]{Ch}. The \emph{Leray residue} is defined by
\[\mathrm{Res}(\alpha)=\big[\Res(\alpha')\big]_{\mathrm{dR}}\]
which gives a map
\[\Res:H^{k}(X\setminus D)\rightarrow H^{k-1}(D).\]

Since the groups $\E^{p,q}(*D)$ form a complex with the operator $\partial$ we get cohomology groups $H^{p,q}_{\partial}(*D)$. In \cite{Fe} the \emph{conjugate Dolbeault residue} was constructed as a map
\[\Res_{\partial}:H^{p,0}_{\partial}(*D)\rightarrow H_{\partial}^{p-1,0}(D).\]
We will give an alternative definition for forms in $H^{d,q}_{\partial}(*D)$ which is quite explicit. Given a $(d,q)$-form $\alpha$ in $\C^{d}$, with coordinates $z=(z_{1},\dots,z_{d})$, which has a holomorphic pole along $z_{1}=0$ we may write
\begin{equation}
\alpha=\frac{\d z_{1}\wedge\widetilde{\alpha}_{z}}{z_{1}^{m}}
\end{equation}
for some smooth form $\widetilde{\alpha}_{z}$ which does not contain $\d z_{1}$. To define a residue we need the following lemma. We do not give the proof since it is very similar to the proof of Lemma~\ref{lemmaAeppli} below.

\begin{lemma}\label{lemmaConj}
Let $z$ and $w$ be coordinates in $\C^{d}$ such that $z_{1}/w_{1}$ is a non-vanishing holomorphic function and let $D=\{z_{1}=0\}$. Suppose $\alpha\in\E^{d,q}(*D)$ has compact support and write
\[\frac{\d z_{1}}{z_{1}^{m}}\wedge\widetilde{\alpha}_{z}(z)=\alpha=\frac{\d w_{1}}{w_{1}^{m}}\wedge\widetilde{\alpha}_{w}(w),\]
for some smooth forms $\widetilde{\alpha}_{z}(z)$ and $\widetilde{\alpha}_{w}(w)$ which does not contain $\d z_{1}$ or $\d w_{1}$.
\begin{description}
\item{(a)} If there is a form $\eta\in\E(*D)$ with compact support such that $\alpha=\partial\eta$ then there is a smooth form $\widehat{\eta}$ on $D$ such that
\[\frac{\partial^{m-1}\widetilde{\alpha}_{z}}{\partial z_{1}^{m-1}}\bigg|_{D}=\partial\widehat{\eta},\]
with $\supp(\widehat{\eta})\subset\supp(\alpha)\cap D$.
\item{(b)} There is a smooth form $\beta$ on $D$ whose support is contained in $\supp(\alpha)\cap D$ such that
\[\frac{\partial^{m-1}\widetilde{\alpha}_{z}}{\partial z_{1}^{m-1}}\bigg|_{D}=\frac{\partial^{m-1}\widetilde{\alpha}_{w}}{\partial w_{1}^{m-1}}\bigg|_{D}+\partial\beta.\]
\end{description}
\end{lemma}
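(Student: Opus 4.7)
The plan is to work in local coordinates throughout, exploiting two structural facts: (i) a $(d,q)$-form must contain $dz_1 \wedge \cdots \wedge dz_d$, so the requirement that $\widetilde\alpha_z$ avoid $dz_1$ forces $\widetilde\alpha_z = dz_2 \wedge \cdots \wedge dz_d \wedge \gamma$ for some $(0,q)$-form $\gamma$; and (ii) on $D = \{z_1 = 0\}$ the intrinsic $\partial$-operator agrees with the restriction of $\partial' := \sum_{j \geq 2} dz_j \wedge \partial/\partial z_j$ from the ambient space.

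For part (a), I would decompose $\eta = dz_1 \wedge A + B$ with $A$ a $(d-2,q)$-form, $B$ a $(d-1,q)$-form, neither containing $dz_1$. Using that $\partial B$ is automatically of the form $dz_1 \wedge \partial_{z_1} B$ (because $B$ already exhausts $dz_2,\ldots,dz_d$), a direct computation gives $\partial \eta = dz_1 \wedge (-\partial' A + \partial_{z_1} B)$. Matching with $\alpha = dz_1 \wedge \widetilde\alpha_z/z_1^m$ and, after subtracting a $\partial$-closed piece from $\eta$ to reduce its pole order to exactly $m-1$, writing $A = \widetilde A/z_1^{m-1}$, $B = \widetilde B/z_1^{m-1}$, I obtain $\widetilde\alpha_z = -z_1\,\partial'\widetilde A + z_1\,\partial_{z_1}\widetilde B - (m-1)\widetilde B$. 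Applying Leibniz to $\partial^{m-1}/\partial z_1^{m-1}$ and restricting to $D$, the $\widetilde B$-contributions cancel exactly and yield
\[\frac{\partial^{m-1}\widetilde\alpha_z}{\partial z_1^{m-1}}\bigg|_D = -(m-1)\,\partial'\bigg(\frac{\partial^{m-2}\widetilde A}{\partial z_1^{m-2}}\bigg)\bigg|_D = \partial\widehat\eta,\]
with $\widehat\eta := -(m-1)\,\partial^{m-2}\widetilde A/\partial z_1^{m-2}|_D$, using $\partial = \partial'$ on $D$.

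For part (b), I would leverage part (a) to reduce to the case $m = 1$. The identity
\[\frac{dz_1 \wedge \widetilde\alpha_z}{z_1^m} = \partial\bigg(\frac{\widetilde\alpha_z}{(1-m)z_1^{m-1}}\bigg) + \frac{1}{m-1}\cdot\frac{dz_1 \wedge \partial_{z_1}\widetilde\alpha_z}{z_1^{m-1}}\]
(and its analogue in $w$-coordinates) writes $\alpha$, modulo a $\partial$-exact form in $\E(*D)$, as a form with pole order $m-1$; iterating drops the order all the way to $m = 1$. For $m = 1$, the expansion $dz_1/z_1 = dw_1/w_1 + \partial\log u$ (valid because $u := z_1/w_1$ is holomorphic and non-vanishing) combined with the decomposition $\widetilde\alpha_z = A + dw_1 \wedge B$ in the $w$-basis gives $\widetilde\alpha_w = A + w_1\cdot(\text{correction})$, so $\widetilde\alpha_z|_D = A|_D = \widetilde\alpha_w|_D$ upon pullback to $D$ (where $dw_1$ vanishes). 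Hence the residues agree outright at $m = 1$, and by (a) the equality in general follows modulo $\partial$-exact forms.

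The main obstacle is the support condition $\supp(\widehat\eta) \subset \supp(\alpha) \cap D$ in (a). The local construction above yields $\widehat\eta$ supported only in $\supp(\eta) \cap D$, which can be strictly larger than $\supp(\alpha) \cap D$ since $\alpha = \partial\eta$ may vanish on parts of $\supp\eta$. To upgrade, one uses that on any open set where $\alpha = 0$ the form $\widehat\eta$ is $\partial$-closed, so it can be modified by a globally $\partial$-exact form (glued via a partition of unity together with local Dolbeault-type primitives on $D$) to arrange the required support. Getting these cut-off corrections to paste together compatibly is the delicate point; I also need to be careful in (b) that the intermediate $\eta$'s used in the pole-order reduction can, after multiplication by a suitable cut-off, be placed into the hypotheses of (a) without altering the residue class.
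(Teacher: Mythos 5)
Your proof follows essentially the same route as the paper's (which omits this proof and refers to the nearly identical argument for Lemma \ref{lemmaAeppli}): part (a) by decomposing $\eta$ with $\d z_{1}$ split off and computing $\partial\eta$ directly, part (b) by lowering the pole order modulo $\im(\partial)$ via the same integration-by-parts identity and then comparing residues at order one. The only point where you diverge is the support condition, which you over-engineer: the direct construction already gives $\supp(\widehat{\eta})\subset\supp(\eta)\cap D$, which is all the paper's own argument for Lemma \ref{lemmaAeppli} establishes and all that the application in Proposition \ref{propAeppli} requires (there the primitives $\eta$ are explicit and supported where $\alpha$ is), so the partition-of-unity correction you flag as the delicate step can simply be dropped.
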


Now suppose $\alpha\in\E^{d,q}(*D)$ and $(\rho_{j})$ is a partition of unity subordinate to a cover of $X$ by charts with coordinates $\big(z_{j}=(z_{j,1},z_{j,2}\dots,z_{j,d})\big)$ such that $D$ is locally given by $z_{j,1}=0$. We write
\[\alpha=\frac{\d z_{j,1}\wedge\widetilde{\alpha}_{j}(z)}{z_{j,1}^{m}}\quad\text{on }\supp(\rho_{j}),\]
and then define
\[R_{\rho,z}(\omega)=\sum_{j}\frac{1}{(m-1)!}\frac{\partial^{m-1}(\rho_{j}\widetilde{\alpha}_{j})}{\partial z_{j,1}^{m-1}}\bigg|_{D}.\]
Using Lemma \ref{lemmaConj} one can prove that, for $\alpha\in\E^{d,q}(*D)$,
\begin{description}
\item{(a)} $R_{\rho,z}(\alpha)=R_{\sigma,w}(\alpha)+\partial\beta$,
\item{(b)} $R_{\rho,z}(\partial\eta)=\partial\widehat{\eta}$.
\end{description}
The proof of (a) and (b) is very similar to the proof of Proposition \ref{propAeppli} below. We can now make the following definition.

\begin{definition}\label{definitionConj}
For a class $[\alpha]\in H^{d,q}_{\partial}(*D)$ we define its \emph{conjugate Dolbeault residue} by 
\[\Res_{\partial}(\alpha)=\big[R_{\rho,z}(\alpha)\big]_{\partial}.\qedhere\]
\end{definition}

The claims (a) and (b) above give that $\Res_{\partial}(\alpha)$ is well defined and independent of the choice of partition of unity and local coordinates. We now present a theorem which is not very related to the rest of the paper, but we think it is a nice application of the conjugate Dolbeault residue.

\begin{theorem}\label{thmResidue}
If $\alpha\in\E^{p,q}(*D)$, where $D$ is a smooth hypersurface, and $\xi$ a test form of bidegree $(d-p,d-q-1)$ then
\begin{align*}
\big\langle\dbar[\alpha],\xi\big\rangle&=\big\langle[\dbar\alpha],\xi\big\rangle+2\pi\i\int_{D}\Res_{\partial}(\alpha\wedge\xi).
\end{align*}
\end{theorem}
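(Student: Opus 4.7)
The plan is to combine Stokes' theorem on the closed manifold $X$ with the analytic continuation defining $[\alpha]$, and then to identify the resulting boundary term locally via Proposition~\ref{PropLocal}.

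First, since $|s|^{2\lambda}\alpha\wedge\xi$ has top holomorphic bidegree and $X$ is closed, Stokes' theorem and the Leibniz rule yield, for $\Re\lambda\gg 1$,
\[0 = \int_X\dbar|s|^{2\lambda}\wedge\alpha\wedge\xi + \int_X|s|^{2\lambda}\dbar\alpha\wedge\xi + (-1)^{p+q}\int_X|s|^{2\lambda}\alpha\wedge\dbar\xi.\]
Each integral extends meromorphically in $\lambda$ by Theorem~\ref{thmPrinc}. Evaluating at $\lambda=0$ and using the convention $\langle\dbar T,\xi\rangle = (-1)^{p+q+1}\langle T,\dbar\xi\rangle$ for a $(p,q)$-current $T$, the last two integrals become $\langle[\dbar\alpha],\xi\rangle$ and $-\langle\dbar[\alpha],\xi\rangle$, so
\[\langle\dbar[\alpha],\xi\rangle - \langle[\dbar\alpha],\xi\rangle = \Big(\int_X\dbar|s|^{2\lambda}\wedge\alpha\wedge\xi\Big)\Big|_{\lambda=0},\]
and it remains to identify the right-hand side with $2\pi\i\int_D\Res_\partial(\alpha\wedge\xi)$.

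Next I would localise with a partition of unity $(\rho_j)$ subordinate to charts in which $D=\{z^{(j)}_1=0\}$, the section reads $s = z^{(j)}_1$ in a local frame, and the metric is $|\cdot|e^{-\phi_j}$. Writing $\dbar|s|^{2\lambda} = \lambda|s|^{2\lambda}(d\bar z^{(j)}_1/\bar z^{(j)}_1 - 2\dbar\phi_j)$, the $\dbar\phi_j$-part produces $\lambda$ times the principal value of a semi-meromorphic form and so vanishes at $\lambda=0$. Decomposing $\alpha\wedge\xi\rho_j = dz^{(j)}_1\wedge\sigma_j/(z^{(j)}_1)^m$ with $\sigma_j = g_j\,dz'\wedge d\bar z'$ (dropping $d\bar z^{(j)}_1$-components, which die upon wedge with $d\bar z^{(j)}_1/\bar z^{(j)}_1$), the remaining piece is, up to a sign coming from reordering $d\bar z^{(j)}_1\wedge dz^{(j)}_1\wedge dz'\wedge d\bar z'$ against $dz\wedge d\bar z$, the canonical current of $\omega := 1/(\bar z^{(j)}_1(z^{(j)}_1)^m)$ evaluated at $g_j\,dz\wedge d\bar z$. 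Proposition~\ref{PropLocal} (with $J=(m,0,\ldots)$, $K=(1,0,\ldots)$) expresses this canonical current as an integral with $\log|z^{(j)}_1|^2$, which I would reduce via Fubini and the Cauchy--Green formula (\ref{goodEq}); the second reordering of differentials produces a matching sign that cancels the first, giving
\[\frac{2\pi\i}{(m-1)!}\int_{D\cap U_j}\frac{\partial^{m-1}g_j}{\partial (z^{(j)}_1)^{m-1}}\bigg|_D dz'\wedge d\bar z'.\]
Summing over $j$ recognises the total as $2\pi\i\int_D R_{\rho,z}(\alpha\wedge\xi) = 2\pi\i\int_D\Res_\partial(\alpha\wedge\xi)$ by Definition~\ref{definitionConj}.

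The main obstacle I anticipate is verifying that the two differential-reordering signs really do cancel in every dimension, and that the orientation convention implicit in $\int_D dz'\wedge d\bar z'$ is handled consistently on both sides of the identity; I would check the final formula in dimensions $d=1,2$ as a sanity check before trusting the general computation.
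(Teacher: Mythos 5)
Your argument is correct, but it is organised quite differently from the paper's. The paper never invokes Stokes' theorem at the level of the $\lambda$-regularised integral: it starts directly from $\langle\dbar[\alpha],\xi\rangle=(-1)^{p+q+1}\langle[\alpha],\dbar\xi\rangle$, writes $\alpha$ and $\xi$ as explicit monomial forms, applies Proposition \ref{PropLocal} to express $\langle[\alpha],\dbar\xi\rangle$ as a $\log|z_{1}|^{2}$-integral, performs the Leibniz splitting $\frac{\partial^{m}}{\partial z_{1}^{m}}\big(a\,\partial b/\partial\bar z_{k}\big)=\frac{\partial^{m+1}(ab)}{\partial z_{1}^{m}\partial\bar z_{1}}-\frac{\partial^{m}}{\partial z_{1}^{m}}\big(\partial a/\partial\bar z_{k}\cdot b\big)$ inside that integral, and then uses (\ref{goodEq}) to convert the first piece into the residue and recognises the second as $\langle[\dbar\alpha],\xi\rangle$. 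You instead do the integration by parts upstream, for $\Re\lambda\gg1$, so that $\langle\dbar[\alpha],\xi\rangle-\langle[\dbar\alpha],\xi\rangle$ is exhibited as the value at $\lambda=0$ of $\int_{X}\dbar|s|^{2\lambda}\wedge\alpha\wedge\xi$; after discarding the $\dbar\phi$-term this is exactly the canonical current of the quasi-meromorphic form $\bar z_{1}^{-1}\d\bar z_{1}\wedge\alpha$, handled by Proposition \ref{PropLocal} and (\ref{goodEq}). This is the standard ``$\dbar$ of the regularising factor'' mechanism for residue currents, and it buys a cleaner conceptual picture (the residue term is manifestly supported on $D$ and independent of the metric, since $\{\cdot\}$ is) at the cost of having to justify Stokes for $\Re\lambda\gg1$ and the identity theorem for the three continuations --- both of which you do. One bonus of your route: if $s$ vanishes to order $I_{1}>1$ the extra factor $I_{1}$ in $\dbar|s|^{2\lambda}$ is precisely $o_{\omega}(s)$ for $\omega=\bar z_{1}^{-1}\d\bar z_{1}\wedge\alpha$, so the identification with the canonical current is automatic.

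On the obstacle you flag: the two reordering signs do \emph{not} cancel --- $\d\bar z_{1}\wedge\d z_{1}\wedge\d z'\wedge\d\bar z'=(-1)^{d}\,\d z\wedge\d\bar z$ while $\d z\wedge\d\bar z=(-1)^{d-1}\,\d z_{1}\wedge\d\bar z_{1}\wedge\d z'\wedge\d\bar z'$, so their product is $-1$ --- but that $-1$ combines with the $-2\pi\i$ from (\ref{goodEq}) to produce the $+2\pi\i$ in your displayed local formula, which is correct (and agrees with a direct polar-coordinate check in $d=1,2$). Be careful if you try to shortcut this by quoting Corollary \ref{corLocal}(a) verbatim: as printed it omits the $(-1)^{d-1}$ from reordering $\d z\wedge\d\bar z$ against $\d z_{1}\wedge\d\bar z_{1}\wedge\d z'\wedge\d\bar z'$ (the paper's own proof of Theorem \ref{thmResidue} reinstates this sign when it passes from the $\log$-integral to the integral over $\Delta'$), so work from Proposition \ref{PropLocal} and (\ref{goodEq}) directly, as you propose.
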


\begin{proof}
We may suppose $\xi$ has support contained in a coordinate chart which is biholomorphic to the unit polydisc $\Delta$ and that $D$ is there given by $z_{1}=0$. We may further suppose that $\alpha=\frac{a}{z_{1}^{m}}\d z_{P}\wedge\d\bar{z}_{Q}$ and $\xi=b\,\d z_{R}\wedge\d\bar{z}_{S}$ where $|P|=p$ and $|Q|=q$. Then we get
\begin{align*}
\alpha\wedge\xi&=(-1)^{q(d-p)+s}\frac{ab}{z_{1}^{m}}\d z\wedge\d\bar{z}_{Q}\wedge\d\bar{z}_{S},\\
\alpha\wedge\dbar\xi&=\sum_{k}(-1)^{(q+1)(d-p)+s+t}\frac{a}{z_{1}^{m}}\frac{\partial b}{\partial\bar{z}_{k}}\d z\wedge\d\bar{z},\\
\dbar\alpha\wedge\xi&=\sum_{k}(-1)^{q(d-p)+d+q+s+t}\frac{\partial a}{\partial\bar{z}_{k}}\frac{b}{z_{1}^{m}}\d z\wedge\d\bar{z},
\end{align*}
where $s$ and $t$ are given by $\d z_{P}\wedge\d z_{R}=(-1)^{s}\d z$ and $\d\bar{z}_{Q}\wedge\d\bar{z}_{k}\wedge\d\bar{z}_{S}=(-1)^{t}\d\bar{z}$ (so $t$ depends on $k$ but we suppress this). For $k=1$ we have
\[(-1)^{t}\d\bar{z}=\d\bar{z}_{Q}\wedge\d\bar{z}_{1}\wedge\d\bar{z}_{S}=(-1)^{q}\d\bar{z}_{1}\wedge\d\bar{z}_{Q}\wedge\d\bar{z}_{S}\]
and hence
\[\d\bar{z}':=\d\bar{z}_{2}\wedge\dots\wedge\d\bar{z}_{d}=(-1)^{q+t}\d\bar{z}_{Q}\wedge\d\bar{z}_{S}.\]
This means that
\[\Res_{\partial}(\alpha\wedge\xi)=\frac{(-1)^{q(d-p)+q+s+t}}{(m-1)!}\frac{\partial^{m-1}(ab)}{\partial z_{1}^{m-1}}\d z'\wedge\d\bar{z}'.\]
We write $\Delta'=\Delta\cap\{z_{1}=0\}=\Delta\cap D$. Using Proposition \ref{PropLocal} and the remark after Definition \ref{defCanonicalCurrent} we get
\begin{align*}
\big\langle\dbar[\alpha],\xi\big\rangle&=(-1)^{p+q+1}\big\langle[\alpha],\dbar\xi\big\rangle\\
&=\sum_{k}\frac{(-1)^{q(d-p)+q+d+s+t}}{(m-1)!}\int_{\Delta}\log|z_{1}|^{2}\frac{\partial^{m}}{\partial z_{1}^{m}}\Big(a\frac{\partial b}{\partial\bar{z}_{k}}\Big)\d z\wedge\d\bar{z}\\
&=\frac{(-1)^{q(d-p)+q+d+s+t}}{(m-1)!}\int_{\Delta}\log|z_{1}|^{2}\frac{\partial^{m+1}(ab)}{\partial z_{1}^{m}\partial\bar{z}_{1}}\d z\wedge\d\bar{z}\\
&-\sum_{k}\frac{(-1)^{q(d-p)+q+d+s+t}}{(m-1)!}\int_{\Delta}\log|z_{1}|^{2}\frac{\partial^{m}}{\partial z_{1}^{m}}\Big(\frac{\partial a}{\partial\bar{z}_{k}}b\Big)\d z\wedge\d\bar{z}\\
&=\frac{2\pi\i(-1)^{q(d-p)+q+s+t}}{(m-1)!}\int_{\Delta'}\frac{\partial^{m-1}(ab)}{\partial z_{1}^{m-1}}\d z'\wedge\d\bar{z}'+\big\langle[\dbar\alpha],\xi\big\rangle\\
&=2\pi\i\int_{D}\Res_{\partial}(\alpha\wedge\xi)+\big\langle[\dbar\alpha],\xi\big\rangle\qedhere
\end{align*}
\end{proof}

\subsection{A residue for the Aeppli cohomology}

Recall that for a complex manifold $X$ one defines the Bott--Chern cohomology groups by
\[H^{p,q}_{BC}(X)=\frac{\ker(\partial)\cap\ker(\bar{\partial})}{\im(\partial\dbar)}\]
and the Aeppli cohomology groups by
\[H^{p,q}_{A}(X)=\frac{\ker(\partial\bar{\partial})}{\im(\partial)+\im(\bar{\partial})}.\]
Given a hermitian metric on $X$ the induced Hodge star operator gives an isomorphism
\[*:H^{p,q}_{BC}(X)\rightarrow H^{n-p,n-q}_{A}(X)\]
so in this sense the Aeppli cohomology is dual to the Bott--Chern cohomology.
We have the following natural maps
\begin{center}\begin{tikzpicture}
	\matrix (m) [matrix of math nodes,row sep=3em,column sep=4em,minimum width=2em]
	{
     	& H^{p,q}_{BC}(X) &  \\
     	H^{p,q}_{\partial}(X) & H^{p+q}_{dR}(X) & H^{p,q}_{\dbar}(X) \\
	 & H^{p,q}_{A}(X) & \\};
  	\path[-stealth]
    		(m-1-2) 	edge (m-2-1)
    		(m-1-2)	edge (m-2-3)
    		(m-2-1)	edge (m-3-2)
		(m-2-3) 	edge (m-3-2)
		(m-1-2) 	edge (m-2-2)
		(m-2-2) 	edge (m-3-2);
\end{tikzpicture}\end{center}
and for a manifold on which the $\partial\dbar$-lemma holds all the outer maps are isomorphisms. In particular this is true for K\"{a}hler manifolds. For a more elaborate discussion on these facts we refer to \cite{Ang1,Ang2,Del}.

Restricting our attention to forms in $\E^{d,d}(*\bar{*}D)$ we consider the cohomology group $H^{d,d}_{A}(*\bar{*}D)$. To define a residue we need the following lemma.

\begin{lemma}\label{lemmaAeppli}
Let $z$ and $w$ be coordinates in $\C^{d}$ such that $z_{1}/w_{1}$ is a non-vanishing holomorphic function and let $D=\{z_{1}=0\}$. Suppose $\omega\in\E^{d,d}(*\bar{*}D)$ has compact support and write
\[\frac{\d z_{1}\wedge\d\bar{z}_{1}}{z_{1}^{m}\bar{z}_{1}^{n}}\wedge\widetilde{\omega}_{z}(z)=\omega=\frac{\d w_{1}\wedge\d\bar{w}_{1}}{w_{1}^{m'}\bar{w}_{1}^{n'}}\wedge\widetilde{\omega}_{w}(w),\]
for some smooth forms $\widetilde{\omega}_{z}(z)$ and $\widetilde{\omega}_{w}(w)$ which does not contain $\d z_{1},\d\bar{z}_{1}$ or $\d w_{1},\d\bar{w}_{1}$.
\begin{description}
\item{(a)} If there are forms $\eta,\nu\in\E(*\bar{*}D)$ with compact support such that $\omega=\partial\eta+\dbar\nu$ then there are smooth forms $\widehat{\eta}$ and $\widehat{\nu}$ on $D$ such that
\[\frac{\partial^{m+n-2}\widetilde{\omega}_{z}}{\partial z_{1}^{m-1}\partial\bar{z}_{1}^{n-1}}\bigg|_{D}=\partial\widehat{\eta}+\dbar\widehat{\nu},\]
with $\supp(\widehat{\eta}),\supp(\widehat{\nu})\subset\supp(\omega)\cap D$.
\item{(b)} There are smooth forms $\widehat{\alpha}$ and $\widehat{\beta}$ on $D$ whose support is contained in $\supp(\omega)\cap D$ such that
\[\frac{1}{(m-1)!(n-1)!}\frac{\partial^{m+n-2}\widetilde{\omega}_{z}}{\partial z_{1}^{m-1}\partial\bar{z}_{1}^{n-1}}\bigg|_{D}=\frac{1}{(m'-1)!(n'-1)!}\frac{\partial^{m+n-2}\widetilde{\omega}_{w}}{\partial w_{1}^{m'-1}\partial\bar{w}_{1}^{n'-1}}\bigg|_{D}+\partial\widehat{\alpha}+\dbar\widehat{\beta}.\]
\end{description}
\end{lemma}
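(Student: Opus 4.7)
The plan is to reduce both parts to an explicit local computation around a point of $D=\{z_{1}=0\}$, patched via a partition of unity as in Section~3.1. The key bidegree remark is that any $\eta\in\E(*\bar{*}D)$ appearing in $\omega=\partial\eta+\dbar\nu$ has bidegree $(d-1,d)$, hence must contain $\d\bar{z}_{1}$ (its antiholomorphic degree is maximal), so locally
\[
\eta \;=\; \frac{\d\bar{z}_{1}\wedge\widetilde{\eta}^{01}}{z_{1}^{a}\bar{z}_{1}^{b}} \;+\; \frac{\d z_{1}\wedge\d\bar{z}_{1}\wedge\widetilde{\eta}^{11}}{z_{1}^{a}\bar{z}_{1}^{b}}
\]
with $\widetilde{\eta}^{01},\widetilde{\eta}^{11}$ smooth and containing no $\d z_{1}$ or $\d\bar{z}_{1}$, and symmetrically for $\nu$. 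I abbreviate the normalised residue by $\mathcal{R}(\omega)=\frac{1}{(m-1)!(n-1)!}\partial_{z_{1}}^{m-1}\partial_{\bar{z}_{1}}^{n-1}\widetilde{\omega}_{z}|_{D}$.

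For part~(a), I would compute $\partial\eta$ directly. The first summand produces a $(d,d)$-form with pole $(a+1,b)$, but applying $\mathcal{R}$ to it collapses via the identity $\partial_{z_{1}}^{a}(z_{1}\partial_{z_{1}}g-a\,g)\big|_{z_{1}=0}=0$ (a consequence of $\partial_{z_{1}}^{a}(z_{1}f)=z_{1}\partial_{z_{1}}^{a}f+a\,\partial_{z_{1}}^{a-1}f$). The second summand has pole $(a,b)$ and its residue is exactly $\partial^{D}\widehat{\eta}$ for an explicit residue-type expression $\widehat{\eta}$ built from $\widetilde{\eta}^{11}$, where $\partial^{D}$ denotes the tangential holomorphic differential on $D$. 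So $\mathcal{R}(\partial\eta)=\partial^{D}\widehat{\eta}$, and the symmetric argument (decomposing $\nu=\d z_{1}\wedge\widetilde{\nu}^{10}/(z_{1}^{c}\bar{z}_{1}^{d})+\d z_{1}\wedge\d\bar{z}_{1}\wedge\widetilde{\nu}^{11}/(z_{1}^{c}\bar{z}_{1}^{d})$) yields $\mathcal{R}(\dbar\nu)=\dbar^{D}\widehat{\nu}$. The support constraints follow because $\widehat{\eta}$ and $\widehat{\nu}$ are assembled directly from $\widetilde{\eta}^{11}$ and $\widetilde{\nu}^{11}$.

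For part~(b), I would first reduce to coordinate changes of the form $z_{1}=u(w)w_{1}$ with $u$ non-vanishing holomorphic and $z_{j}=w_{j}+w_{1}v_{j}(w)$ for $j\geq 2$, since purely tangential changes of coordinates on $D$ leave $\mathcal{R}(\omega)$ strictly invariant. Using $\d\log z_{1}=\d w_{1}/w_{1}+\partial\log u$ and its antiholomorphic counterpart, and noting that when wedged against $\widetilde{\omega}_{z}$ (which carries no $\d w_{1},\d\bar{w}_{1}$ in $w$-coordinates) the extraneous 1-forms $\partial\log u,\bar\partial\log\bar{u}$ and those induced by the $v_{j}$ survive only in the $\d w_{1}\wedge\d\bar{w}_{1}$ component (other components are killed by wedge cancellation with $\widetilde{\omega}_{z}$'s own $\d w_{k},\d\bar{w}_{k}$ factors), I obtain $\widetilde{\omega}_{w}$ as an explicit Jacobian-type factor times the pullback of $\widetilde{\omega}_{z}$. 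In particular $m'=m,\,n'=n$, and the claim reduces to comparing $\partial_{w_{1}}^{m-1}\partial_{\bar{w}_{1}}^{n-1}\widetilde{\omega}_{w}|_{w_{1}=0}$ with $\partial_{z_{1}}^{m-1}\partial_{\bar{z}_{1}}^{n-1}\widetilde{\omega}_{z}|_{z_{1}=0}$.

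The main obstacle is absorbing the Leibniz-expanded difference into $\partial^{D}\widehat{\alpha}+\dbar^{D}\widehat{\beta}$. Each correction is a product of derivatives of the holomorphic data $u,v_{j}$ (annihilated by $\partial_{\bar{w}_{k}}$), the antiholomorphic data $\bar u,\bar v_{j}$ (annihilated by $\partial_{w_{k}}$), and derivatives of $\widetilde{\omega}_{z}|_{D}$. Each factor of $\partial_{w_{k}}v_{j}$ naturally assembles into $\partial^{D}$ of a smooth primitive via the product rule, and symmetrically each $\partial_{\bar{w}_{k}}\bar v_{j}$ into $\dbar^{D}$; the purely mixed corrections (simultaneously containing $v_{j}'$ and $\bar v_{j'}'$ factors) collapse through identities of the form $\partial_{w_{k}}\partial_{\bar{w}_{k}}(v_{j}\bar v_{j'}h)\,\d w_{k}\wedge\d\bar{w}_{k}=\partial^{D}\bigl(\dbar^{D}(v_{j}\bar v_{j'}h)\bigr)$, which rewrites them as $\partial^{D}$ of an explicit $\dbar^{D}$-primitive. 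Support control for $\widehat{\alpha},\widehat{\beta}$ is automatic, since each correction factors through a derivative of $\widetilde{\omega}_{z}|_{D}$, which is supported in $\supp(\omega)\cap D$.
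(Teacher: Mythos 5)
Your part (a) is essentially the paper's own argument: the same decomposition of $\eta$ into the components with and without $\d z_{1}$, and the same cancellation $\partial_{z_{1}}^{a}\big(z_{1}\partial_{z_{1}}g-ag\big)\big|_{z_{1}=0}=0$ (in the paper this is the cancellation between $-(m-1)\eta_{2}$ and $z_{1}\,\partial\eta_{2}/\partial z_{1}$). That part is fine.

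Part (b) is where you diverge, and your route has two genuine gaps. First, you assert that $m'=m$ and $n'=n$. This does not follow: the presentation $\omega=z_{1}^{-m}\bar{z}_{1}^{-n}\,\d z_{1}\wedge\d\bar{z}_{1}\wedge\widetilde{\omega}_{z}$ is not unique (one can always multiply numerator and denominator by powers of $z_{1},\bar{z}_{1}$), which is precisely why the statement carries the normalising factorials and allows $(m,n)\neq(m',n')$. You need a separate argument that the normalised residue is independent of the presentation; the paper does this with a short Leibniz computation using that restriction to $D$ forces all the extra derivatives to land on $z_{1}^{m'-m}\bar{z}_{1}^{n'-n}$. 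Second, and more seriously, the central claim of your part (b) --- that every term of the Leibniz-expanded difference lies in $\im\partial+\im\dbar$ on $D$ --- is asserted rather than proved. Pulling a derivative off $v_{j}$ via $(\partial_{w_{k}}v_{j})h=\partial_{w_{k}}(v_{j}h)-v_{j}\partial_{w_{k}}h$ produces an exact term \emph{plus a remainder}, and you give no bookkeeping showing that the remainders cancel; that cancellation is where the entire content of (b) lives. (Your identity $\partial_{w_{k}}\partial_{\bar{w}_{k}}(v_{j}\bar{v}_{j'}h)\,\d w_{k}\wedge\d\bar{w}_{k}=\partial\big(\dbar(v_{j}\bar{v}_{j'}h)\big)$ is also wrong as stated, since $\partial\dbar$ of a function contains all the off-diagonal terms $\d w_{j}\wedge\d\bar{w}_{k}$ with $j\neq k$.) The paper avoids this computation entirely: iterating
\[
\omega=-\partial\Big(\frac{1}{m-1}\frac{\d\bar{z}_{1}\wedge\widetilde{\omega}_{z}}{z_{1}^{m-1}\bar{z}_{1}^{n}}\Big)+\frac{1}{m-1}\frac{\d z_{1}\wedge\d\bar{z}_{1}}{z_{1}^{m-1}\bar{z}_{1}^{n}}\wedge\frac{\partial\widetilde{\omega}_{z}}{\partial z_{1}}
\]
and its conjugate reduces $\omega$, modulo $\im\partial+\im\dbar$, to a form with pole of order $(1,1)$ whose numerator is the normalised residue; doing this in both coordinate systems and applying part (a) to the difference yields (b) with no Jacobian analysis at all. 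I recommend that reduction; otherwise you must supply the full combinatorial cancellation, which your sketch does not.
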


\begin{proof}
We first prove (a) and suppose $\omega=\partial\eta$. If
\[\eta=\frac{\d z_{1}\wedge\d\bar{z}_{1}\wedge\eta_{1}+\d\bar{z}_{1}\wedge\eta_{2}}{z_{1}^{m-1}\bar{z}_{1}^{n}},\]
where $\eta_{1}$ and $\eta_{2}$ does not contain $\d z_{1}$ or $\d\bar{z}_{1}$,
then
\[\omega=\partial\eta=\frac{\d z_{1}\wedge\d\bar{z}_{1}}{z_{1}^{m}\bar{z}_{1}^{n}}\wedge\Big(-(m-1)\eta_{2}+z_{1}\partial\eta_{1}+z_{1}\frac{\partial\eta_{2}}{\partial z_{1}}\Big)\]
and therefore
\[\widetilde{\omega}_{z}=-(m-1)\eta_{2}+z_{1}\partial\eta_{1}+z_{1}\frac{\partial\eta_{2}}{\partial z_{1}}.\]
We get
\begin{align*}
\frac{\partial^{m+n-2}\widetilde{\omega}_{z}}{\partial z_{1}^{m-1}\partial\bar{z}_{1}^{n-1}}\bigg|_{D}&=\frac{\partial^{m+n-2}}{\partial z_{1}^{m-1}\partial\bar{z}_{1}^{n-1}}\Big(-(m-1)\eta_{2}+z_{1}\partial\eta_{1}+z_{1}\frac{\partial\eta_{2}}{\partial z_{1}}\Big)\bigg|_{D}\\
&=(m-1)\Big(-\frac{\partial^{m+n-2}\eta_{2}}{\partial z_{1}^{m-1}\partial\bar{z}_{1}^{n-1}}+\frac{\partial^{m+n-3}\partial\eta_{1}}{\partial z_{1}^{m-2}\partial\bar{z}_{1}^{n-1}}+\frac{\partial^{m+n-2}\eta_{2}}{\partial z_{1}^{m-1}\partial\bar{z}_{1}^{n-1}}\Big)\bigg|_{D}\\
&=\partial\Big((m-1)\frac{\partial^{m+n-3}\eta_{1}}{\partial z_{1}^{m-2}\partial\bar{z}_{1}^{n-1}}\Big|_{D}\Big).
\end{align*}
The case $\omega=\dbar\nu$ is treated analogously. By linearity we get the case $\omega=\partial\eta+\dbar\nu$ and hence we have proven (a). Now we prove (b) and we first suppose $(m,n)=(m',n')$. The calculation
\begin{align*}
\omega&=-\partial\Big(\frac{1}{m-1}\frac{\d\bar{z}_{1}\wedge\widetilde{\omega}_{z}}{z_{1}^{m-1}\bar{z}_{1}^{n}}\Big)-\frac{1}{m-1}\frac{\d\bar{z}_{1}\wedge\partial\widetilde{\omega}_{z}}{z_{1}^{m-1}\bar{z}_{1}^{n}}\\
&=-\partial\Big(\frac{1}{m-1}\frac{\d\bar{z}_{1}\wedge\widetilde{\omega}_{z}}{z_{1}^{m-1}\bar{z}_{1}^{n}}\Big)+\frac{1}{m-1}\frac{\d z_{1}\wedge\d\bar{z}_{1}}{z_{1}^{m-1}\bar{z}_{1}^{n}}\wedge\frac{\partial\widetilde{\omega}_{z}}{\partial z_{1}}
\end{align*}
may be iterated and so we can write
\[\omega=\partial\alpha_{1}+\dbar\beta_{1}+\frac{1}{(m-1)!(n-1)!}\frac{\d z_{1}\wedge\d\bar{z}_{1}}{z_{1}\bar{z}_{1}}\wedge\frac{\partial^{m+n-2}\widetilde{\omega}_{z}}{\partial z_{1}^{m-1}\partial\bar{z}_{1}^{n-1}}.\]
Doing the same for the coordinate $w$ we get that
\[\frac{\d z_{1}\wedge\d\bar{z}_{1}}{z_{1}\bar{z}_{1}}\wedge\frac{\partial^{m+n-2}\widetilde{\omega}_{z}}{\partial z_{1}^{m-1}\partial\bar{z}_{1}^{n-1}}-\frac{\d w_{1}\wedge\d\bar{w}_{1}}{w_{1}\bar{w}_{1}}\wedge\frac{\partial^{m+n-2}\widetilde{\omega}_{w}}{\partial w_{1}^{m-1}\partial\bar{w}_{1}^{n-1}}=\partial\alpha+\dbar\beta\]
for some $\alpha$ and $\beta$. Using (a) we get
\[\frac{\partial^{m+n-2}\widetilde{\omega}_{z}}{\partial z_{1}^{m-1}\partial\bar{z}_{1}^{n-1}}\bigg|_{D}=\frac{\partial^{m+n-2}\widetilde{\omega}_{w}}{\partial w_{1}^{m-1}\partial\bar{w}_{1}^{n-1}}\bigg|_{D}+\partial\widehat{\alpha}+\dbar\widehat{\beta}\]
which is what was to be proven. Now we treat the case that $(m,n)\neq(m',n')$ and for simplicity we suppose $m'\geq m$ and $n'\geq n$. We get
\begin{align*}
&\frac{1}{(m'-1)!(n'-1)!}\frac{\partial^{m'+n'-2}z_{1}^{m'-m}\bar{z}_{1}^{n'-n}\widetilde{\omega}_{z}}{\partial z_{1}^{m'-1}\partial\bar{z}_{1}^{n'-1}}\bigg|_{D}\\
&=\frac{1}{(m'-1)!(n'-1)!}{{m'-1}\choose{m'-m}}{{n'-1}\choose{n'-n}}\frac{\partial^{m'-m}z_{1}^{m'-m}}{\partial z_{1}^{m'-m}}\frac{\partial^{n'-n}\bar{z}_{1}^{n'-n}}{\partial\bar{z}_{1}^{n'-n}}\frac{\partial^{m+n-2}\widetilde{\omega}_{z}}{\partial z_{1}^{m-1}\partial\bar{z}_{1}^{n-1}}\bigg|_{D}\\
&=\frac{1}{(m-1)!(n-1)!}\frac{\partial^{m+n-2}\widetilde{\omega}_{z}}{\partial z_{1}^{m-1}\partial\bar{z}_{1}^{n-1}}\bigg|_{D}
\end{align*}
since the restriction to $D$ forces the correct amount of derivatives to land on $z_{1}^{m'-m}$ and $\bar{z}_{1}^{n'-n}$. This proves (b).
\end{proof}

For a form $\omega\in\E^{d,d}(*\bar{*}D)$ and a partition of unity $(\rho_{j})$ subordinate to a cover of $X$ by charts with coordinates $\big(z_{j}=(z_{j,1},z_{j,2}\dots,z_{j,d})\big)$ such that $D$ is locally given by $z_{j,1}=0$ and
\[\omega=\frac{\d z_{j,1}\wedge\d\bar{z}_{j,1}}{z_{j,1}^{m}\bar{z}_{j,1}^{n}}\wedge\widetilde{\omega}_{j}(z)\quad\text{on }\supp(\rho_{j}),\]
we let
\[\Res_{\rho,z}(\omega)=\sum_{j}\frac{1}{(m-1)!(n-1)!}\frac{\partial^{m+n-2}(\rho_{j}\widetilde{\omega}_{j})}{\partial z_{j,1}^{m-1}\partial\bar{z}_{j,1}^{n-1}}\bigg|_{D}.\qedhere\]

\begin{proposition}\label{propAeppli}
For $\omega\in\E^{d,d}(*\bar{*}D)$ we have
\begin{description}
\item{(a)} $\Res_{\rho,z}(\omega)=\Res_{\sigma,w}(\omega)+\partial\alpha+\dbar\beta$,
\item{(b)} $\Res_{\rho,z}(\partial\eta+\dbar\nu)=\partial\alpha+\dbar\beta$.
\end{description}
\end{proposition}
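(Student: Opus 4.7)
The plan is to reduce both claims to the local Lemma \ref{lemmaAeppli} through partition-of-unity manipulations. I first observe that if $R_z$ denotes the local residue operator
\[\mu\mapsto\frac{1}{(m-1)!(n-1)!}\frac{\partial^{m+n-2}\widetilde{\mu}}{\partial z_1^{m-1}\partial\bar{z}_1^{n-1}}\bigg|_D\]
acting on compactly supported $\mu\in\E^{d,d}(*\bar{*}D)$ in a single chart with coordinates $z$, then $R_z$ is $\C$-linear and, by construction, $\Res_{\rho,z}(\omega)=\sum_j R_{z_j}(\rho_j\omega)$. The combination of linearity with the support and change-of-chart properties in Lemma \ref{lemmaAeppli} is what drives the argument.

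For part (a), I would introduce the common refinement $(\rho_j\sigma_k)$ of the two partitions. Inserting $\sum_k\sigma_k=1$ inside $\Res_{\rho,z}(\omega)$ and, symmetrically, $\sum_j\rho_j=1$ inside $\Res_{\sigma,w}(\omega)$, both sums become double sums over $(j,k)$ of residues of the compactly supported forms $\rho_j\sigma_k\omega$. On each overlap both coordinate systems are defined, and since $D$ is cut out by $z_{j,1}$ and by $w_{k,1}$, the ratio $z_{j,1}/w_{k,1}$ is a non-vanishing holomorphic function, so Lemma \ref{lemmaAeppli}(b) gives $R_{z_j}(\rho_j\sigma_k\omega)=R_{w_k}(\rho_j\sigma_k\omega)+\partial\alpha_{jk}+\dbar\beta_{jk}$ with supports contained in $\supp(\rho_j)\cap\supp(\sigma_k)\cap D$. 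Local finiteness of $(\rho_j\sigma_k)$ guarantees that the primitives assemble into well-defined smooth forms $\alpha,\beta$ on $D$, yielding (a).

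For part (b), given $\omega=\partial\eta+\dbar\nu$, I would expand
\[\rho_j\omega=\partial(\rho_j\eta)+\dbar(\rho_j\nu)-\partial\rho_j\wedge\eta-\dbar\rho_j\wedge\nu,\]
where each of the four terms is compactly supported in chart $j$. Lemma \ref{lemmaAeppli}(a) applied to the first two pieces produces primitives $\hat\eta_j,\hat\nu_j$ whose sums give a $\partial+\dbar$-exact contribution to $\Res_{\rho,z}(\omega)$. For the remaining correction $\sum_j R_{z_j}(\partial\rho_j\wedge\eta+\dbar\rho_j\wedge\nu)$, I would reinsert $\sum_k\rho_k=1$ to obtain $\sum_{j,k}R_{z_j}\bigl(\rho_k\partial\rho_j\wedge\eta+\rho_k\dbar\rho_j\wedge\nu\bigr)$, invoke Lemma \ref{lemmaAeppli}(b) on each $(j,k)$-overlap to swap $R_{z_j}$ for $R_{z_k}$ modulo $\partial+\dbar$-exact terms, and then interchange the order of summation so that $\sum_j\partial\rho_j=\sum_j\dbar\rho_j=0$ collapses the main sum to zero, leaving only the exact remainder.

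The step I expect to be the main obstacle is this last partition-reinsertion manoeuvre in (b): one must check that the $\partial+\dbar$-exact corrections produced by the chart-swap in Lemma \ref{lemmaAeppli}(b) assemble into globally well-defined smooth forms on $D$ under the locally finite double sum, and that the cancellation $\sum_j\partial\rho_j=0$ is not undone by uncontrolled choices of primitives. The support clauses in Lemma \ref{lemmaAeppli}(a) and (b), together with local finiteness of the partition of unity, are what make this bookkeeping go through.
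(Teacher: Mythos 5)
Your proposal is correct and follows essentially the same route as the paper: both parts are reduced to Lemma \ref{lemmaAeppli} by inserting a second partition of unity, applying the change-of-chart statement on each overlap, and using the support clauses together with local finiteness to assemble the local exact corrections into global forms on $D$. The only deviation is in part (b), where the paper keeps each summand exact by writing $\partial\eta+\dbar\nu=\sum_i\big(\partial(\sigma_i\eta)+\dbar(\sigma_i\nu)\big)$ and applying Lemma \ref{lemmaAeppli}(a) directly, whereas you place the cutoffs outside the derivatives and cancel the resulting $\partial\rho_j\wedge\eta$ and $\dbar\rho_j\wedge\nu$ error terms via $\sum_j\partial\rho_j=\sum_j\dbar\rho_j=0$; both work, and your version is arguably more explicit about how the operator $\Res_{\rho,z}$, which multiplies by $\rho_j$ before differentiating, interacts with exactness.
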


\begin{proof}
We write
\[\Res^{j}_{\rho,z}(\omega)=\frac{1}{(m-1)!(n-1)!}\frac{\partial^{m+n-2}(\rho_{j}\widetilde{\omega}_{j})}{\partial z_{j,1}^{m-1}\partial\bar{z}_{j,1}^{n-1}}\bigg|_{D}\]
so that
\[\Res_{\rho,z}(\omega)=\sum_{j}\Res^{j}_{\rho,z}(\omega).\]
We have the following two identities:
\begin{description}
\item{(i)} $\Res^{j}_{\rho,z}(\sigma_{i}\omega)=\Res^{i}_{\sigma,w}(\rho_{j}\omega)+\partial\alpha_{i,j}+\dbar\beta_{i,j}$,
\item{(ii)} $\Res^{j}_{\rho,z}(\omega)=\sum_{i}\Res^{j}_{\rho,z}(\sigma_{i}\omega)$.
\end{description}
The first is basically Lemma 3.5 (b) and (ii) is just an interchange of the differentiation and the sum. Using the claims we get
\begin{align*}
\Res_{\rho,z}(\omega)&\stackrel{\text{def}}{=}\sum_{j}\Res^{j}_{\rho,z}(\omega)\\
&\stackrel{(ii)}{=}\sum_{j,i}\Res^{j}_{\rho,z}(\sigma_{i}\omega)\\
&\stackrel{(i)}{=}\sum_{i,j}\Res^{i}_{\sigma,w}(\rho_{j}\omega)+\partial\alpha_{i,j}+\dbar\beta_{i,j}\\
&\stackrel{(ii)}{=}\sum_{i}\Res^{i}_{\sigma,w}(\omega)+\sum_{i,j}\partial\alpha_{i,j}+\dbar\beta_{i,j}\\
&\stackrel{\text{def}}{=}\Res_{\sigma,w}(\omega)+\partial\Big(\sum_{i,j}\alpha_{i,j}\Big)+\dbar\Big(\sum_{i,j}\beta_{i,j}\Big)
\end{align*}
since $\alpha_{i,j}$ and $\beta_{i,j}$ has support contained in $\mathrm{supp}(\rho_{j}\sigma_{i})$. Thus we have proven (a). We further have
\begin{align*}
\Res_{\rho,z}(\partial\eta+\dbar\nu)&=\Res_{\rho,z}\Big(\sum_{i}\partial(\sigma_{i}\eta)+\dbar(\sigma_{i}\nu)\Big)\\
&=\sum_{i}\Res_{\rho,z}\big(\partial(\sigma_{i}\eta)+\dbar(\sigma_{i}\nu)\big)\\
&=\sum_{i}\partial\alpha_{i}\\
&=\partial\Big(\sum_{i}\alpha_{i}\Big).
\end{align*}
which proves (b).
\end{proof}

Using Proposition \ref{propAeppli} we can give the following definition.

\begin{definition}\label{defAeppli}
For $\omega\in H_{A}^{d,d}(*\bar{*}D)$ we define the \emph{Aeppli residue} by
\[\Res_{A}(\omega)=[\Res_{\rho,z}(\omega)]_{A}\qedhere\]
\end{definition}

\begin{remark}
Our definition of the Aeppli residue is very similar to the definition of \emph{the residue map} in \cite{Fe2}. They define this in a different context and for forms with, what they call, tame singularities.
\end{remark}

We thus have a map $\Res_{A}:H^{d,d}_{A}(*\bar{*}D)\rightarrow H_{A}^{d-1,d-1}(D)$.

\begin{proposition}\label{AeppliProperties}
\begin{description}
\item{(a)} If $\omega\in H^{d,d}_{A}(*\bar{*}D)$ is semi-meromorphic then $\Res_{A}(\omega)=0$.
\item{(b)} If $\alpha$ and $\beta$ are meromorphic $(d,0)$-forms with poles along a smooth hypersurface $D$ and the pole of $\beta$ is of order one then
\[\Res_{A}(\alpha\wedge\bar{\beta})=(-1)^{d-1}\big[\Res_{\partial}\,\alpha\wedge\overline{\Res\,\beta}\big]_{A}\]
where the right hand side is a well defined class and $\Res\,\beta$ denotes the Poincaré residue.
\end{description}
\end{proposition}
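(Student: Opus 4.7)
My approach is to reduce both statements to a direct local calculation of $\Res_{\rho,z}$ from its defining formula, relying on Lemma \ref{lemmaAeppli}(b) to justify each convenient choice of local presentation. Throughout I work in a chart where $D = \{z_1 = 0\}$ and apply the decomposition $\omega = (dz_1 \wedge d\bar{z}_1/z_1^m \bar{z}_1^n) \wedge \tilde{\omega}_z$ used in that lemma.

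For (a), a semi-meromorphic $\omega \in \E^{d,d}(*\bar{*}D)$ has no antiholomorphic poles, so locally $\omega = b\, dz \wedge d\bar{z}/z_1^m$ for a smooth function $b$. I inflate the antiholomorphic pole order artificially by writing
\[\omega = (-1)^{d-1}\frac{dz_1 \wedge d\bar{z}_1}{z_1^m \bar{z}_1} \wedge \bigl(\bar{z}_1 b\, dz' \wedge d\bar{z}'\bigr),\]
so that $n = 1$ and the numerator $\tilde{\omega}_z$ carries a factor of $\bar{z}_1$. Lemma \ref{lemmaAeppli}(b) guarantees that this rewriting changes $\Res_{\rho,z}$ only by an element of $\im\partial + \im\dbar$. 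Plugging into the formula with $n = 1$, the antiholomorphic derivative is trivial; the Wirtinger derivatives $\partial/\partial z_1$ leave the factor $\bar{z}_1$ untouched, which then vanishes upon restriction to $D$. Each $\Res^j_{\rho,z}(\omega)$ is therefore identically zero, and so $\Res_A(\omega) = 0$.

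For (b), write locally $\alpha = dz_1 \wedge \tilde{\alpha}/z_1^m$ and $\beta = dz_1 \wedge \tilde{\beta}/z_1$, where $\tilde{\alpha}$ and $\tilde{\beta}$ are smooth $(d-1,0)$-forms not containing $dz_1$, and $\tilde{\beta}$ is moreover holomorphic. Commuting $d\bar{z}_1$ past $\tilde{\alpha}$ yields the sign $(-1)^{d-1}$:
\[\alpha \wedge \bar{\beta} = (-1)^{d-1}\frac{dz_1 \wedge d\bar{z}_1}{z_1^m \bar{z}_1} \wedge \tilde{\alpha} \wedge \overline{\tilde{\beta}}.\]
Now apply $\Res^j_{\rho,z}$ with $n = 1$. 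Since $\overline{\tilde{\beta}}$ is antiholomorphic, the Wirtinger derivative $\partial/\partial z_1$ kills it, so $\overline{\tilde{\beta}}$ factors out of the derivative in $z_1$. Summing over the partition of unity gives
\[\Res_{\rho,z}(\alpha \wedge \bar{\beta}) = (-1)^{d-1} R_{\rho,z}(\alpha) \wedge \overline{\Res\beta},\]
which identifies the Aeppli residue with $(-1)^{d-1}[\Res_\partial \alpha \wedge \overline{\Res\beta}]_A$, since $R_{\rho,z}(\alpha)$ is precisely the representative of $\Res_\partial\alpha$ used to define the conjugate Dolbeault residue above and $\tilde{\beta}|_D = \Res\beta$.

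It remains to confirm that $[\Res_\partial \alpha \wedge \overline{\Res\beta}]_A$ is a well defined element of $H^{d-1,d-1}_A(D)$. Every $(d-1,d-1)$-form on the $(d-1)$-dimensional manifold $D$ lies automatically in $\ker \partial\dbar$, so the only ambiguity is the representative chosen for $\Res_\partial \alpha$. Replacing $R_{\rho,z}(\alpha)$ by $R_{\rho,z}(\alpha) + \partial\gamma$ changes the wedge by $\partial\gamma \wedge \overline{\Res\beta}$. Because $\Res\beta$ is holomorphic on $D$, the form $\overline{\Res\beta}$ is antiholomorphic and hence $\partial$-closed, so $\partial\gamma \wedge \overline{\Res\beta} = \pm\partial(\gamma \wedge \overline{\Res\beta}) \in \im\partial$, which represents zero in $H^{d-1,d-1}_A(D)$. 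The main delicacies of the plan are the sign tracking for $(-1)^{d-1}$ and the appeal to Lemma \ref{lemmaAeppli}(b) to legitimize the artificial inflation of the antiholomorphic pole order in (a); the rest is unwinding definitions.
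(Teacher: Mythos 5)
Your proof is correct and follows essentially the same route as the paper: part (a) is the paper's one-line observation that one may take $n=1$ (so the numerator carries a factor $\bar z_1$ that dies on restriction to $D$), spelled out in detail, and part (b) is the same local computation with the same sign bookkeeping and the same well-definedness argument using that $\overline{\Res\,\beta}$ is $\partial$-closed. No gaps.
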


\begin{proof}
We get (a) from Lemma \ref{lemmaAeppli} since we may choose $n\geq1$. To prove (b) write locally $\alpha=(a/z_{1}^{m})\d z$ and $\beta=(b/z_{1})\d z$. Then $\alpha\wedge\bar{\beta}=(-1)^{d-1}\big(a\bar{b}/(z_{1}^{m}\bar{z}_{1})\big)\d z_{1}\wedge\d\bar{z}_{1}\wedge\d z'\wedge\d\bar{z}'$ and hence
\[\Res_{A}(\alpha\wedge\bar{\beta})=(-1)^{d-1}\Big[\frac{\partial^{m-1}a}{\partial z_{1}^{m-1}}\bar{b}\,\d z'\wedge\d\bar{z}'\Big]_{A}\]
and $\Res_{\partial}(\alpha)=\big[\frac{\partial^{m-1}a}{\partial z_{1}^{m-1}}\d z'\big]_{\partial}$. The Poincaré residue $\Res\,\beta$ is meromorphic since $\beta$ is. Letting $R=\frac{\partial^{m-1}a}{\partial z_{1}^{m-1}}\d z'$ we get that $(-1)^{d}R\wedge\overline{\Res\,\beta}$ is a representative of $\Res_{A}(\alpha\wedge\bar{\beta})$ and $R$ is a representative of $\Res_{\partial}(\alpha)$. If we choose a different representative, say $R+\partial\gamma$, of $\Res_{\partial}(\alpha)$ we get
\begin{align*}
(R+\partial\gamma)\wedge\overline{\Res\,\beta}=R\wedge\overline{\Res\,\beta}+\partial(\gamma\wedge\overline{\Res\,\beta})
\end{align*}
and therefore $\big[\Res_{\partial}\,\alpha\wedge\overline{\Res\,\beta}\big]_{A}$ is well defined.
\end{proof}

The next theorem relates the Aeppli residue to the canonical currents defined in Section 2.2. It gives an indication that canonical currents do not behave like principle value currents but rather as residue currents.

\begin{theorem}\label{thmAeppli}
For $\omega\in\E(*\bar{*}D)$ with $\kappa(\omega)>0$ and $D$ a smooth hypersurface we have
\[\big\langle\{\omega\},\xi\big\rangle=-2\pi\i\int_{D}\Res_{A}(\omega\wedge\xi).\]
\end{theorem}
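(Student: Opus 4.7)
The plan is to reduce the identity to a local computation via a partition of unity. Cover $X$ by coordinate charts $U_j$ on which $D$ is either absent or given by $\{z_{j,1}=0\}$, and choose a subordinate partition of unity $(\rho_j)$. Both sides are linear in $\xi$, and the right-hand side depends only on the Aeppli class of $\Res_{\rho,z}(\omega\wedge\xi)$ because distinct representatives differ by $\partial$- or $\dbar$-exact compactly supported forms on $D$, which integrate to zero by Stokes' theorem. Hence it suffices to verify the identity with $\xi$ replaced by $\rho_j\xi$ for each $j$. We may therefore assume $\xi$ has compact support in a single polydisc $\Delta\subset\C^d$ with $D\cap\Delta=\{z_1=0\}$ and
\[\omega\wedge\xi=\frac{\psi}{z_1^m\bar{z}_1^n}\,dz\wedge d\bar{z}\]
for some smooth, compactly supported $\psi$ and integers $m,n\geq 0$.

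If $m=0$ or $n=0$, then $\omega\wedge\xi$ is semi-meromorphic on $\Delta$, and both sides vanish. The left-hand side vanishes because the proof of Theorem \ref{thmPrinc}(c) is local and shows $F_\xi$ is holomorphic at the origin in this case, so $\lambda^{\kappa(\omega)}F_\xi(\lambda)\big|_{\lambda=0}=0$ thanks to $\kappa(\omega)\geq 1$. The right-hand side vanishes because Proposition \ref{AeppliProperties}(a) gives $\Res_A(\omega\wedge\xi)=0$.

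If $m,n\geq 1$, Corollary \ref{corLocal}(a) evaluates the left-hand side as
\[\big\langle\{\omega\},\xi\big\rangle=-\frac{2\pi i}{(m-1)!(n-1)!}\int_{\Delta\cap D}\frac{\partial^{m+n-2}\psi}{\partial z_1^{m-1}\partial\bar{z}_1^{n-1}}\,dz'\wedge d\bar{z}'.\]
For the right-hand side, unwind Definition \ref{defAeppli}: writing $\omega\wedge\xi=\bigl((dz_1\wedge d\bar{z}_1)/(z_1^m\bar{z}_1^n)\bigr)\wedge\widetilde{\omega\wedge\xi}$, one identifies $\widetilde{\omega\wedge\xi}$ with $\psi\,dz'\wedge d\bar{z}'$ up to a sign coming from the rearrangement of $dz\wedge d\bar{z}$ into $dz_1\wedge d\bar{z}_1\wedge dz'\wedge d\bar{z}'$. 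The same rearrangement is already present inside Corollary \ref{corLocal}(a), so the sign cancels and a direct computation yields
\[\int_D\Res_A(\omega\wedge\xi)=\frac{1}{(m-1)!(n-1)!}\int_{\Delta\cap D}\frac{\partial^{m+n-2}\psi}{\partial z_1^{m-1}\partial\bar{z}_1^{n-1}}\,dz'\wedge d\bar{z}',\]
which, compared with the previous display, yields the theorem.

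The substantive analytic content is entirely packaged in Corollary \ref{corLocal}(a); the remaining work is essentially bookkeeping. The only delicate step I expect is the sign accounting in the case $m,n\geq 1$: one must verify that the block convention $dz\wedge d\bar{z}=dz_1\wedge\cdots\wedge dz_d\wedge d\bar{z}_1\wedge\cdots\wedge d\bar{z}_d$ together with the interleaved convention for $dz'\wedge d\bar{z}'$ produces matching signs in the two displays, so that the factor $-2\pi i$ comes out with the right sign.
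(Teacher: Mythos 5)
Your proposal is correct and follows essentially the same route as the paper: reduce to a single chart via a partition of unity, evaluate the left-hand side by the local formula of Proposition \ref{PropLocal} combined with the Cauchy--Green identity (i.e.\ Corollary \ref{corLocal}(a)), and match it against the definition of $\Res_{A}$. Your explicit treatment of the degenerate local case $m=0$ or $n=0$ (where both sides vanish) is a point the paper's proof passes over silently, but it does not change the argument.
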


\begin{proof}
Choose a partition of unity $(\rho_{\iota})$ subordinate to a cover consisting of charts which are mapped to the unit polydisc in which the hypersurface is given by $z_{1}=0$. Suppose the holomorphic pole has order $m$ and the anti-holomorphic pole order $n$. Since $\kappa(\omega)>0$ by assumption we have $m,n>0$. Notice that $\kappa(\omega)>0$ together with that $D$ is smooth implies that $\kappa(\omega)=1$. Write locally $\omega\wedge\xi=\psi/(z_{1}^{m}\bar{z}_{1}^{n})\d z\wedge\d\bar{z}$. Then, using Proposition \ref{PropLocal}, (\ref{goodEq}) and Definition \ref{defAeppli} we get
\begin{align*}
\big\langle\{\omega\},\xi\big\rangle&=\sum_{\iota}\frac{1}{(m-1)!(n-1)!}\int_{\Delta}\log|z_{1}|^{2}\frac{\partial^{m+n}\rho_{\iota}\psi}{\partial z_{1}^{m}\partial\bar{z}_{1}^{n}}\d z\wedge\d\bar{z}\\
&=-2\pi\i\sum_{\iota}\frac{1}{(m-1)!(n-1)!}\int_{\Delta\cap D}\frac{\partial^{m+n-2}\rho_{\iota}\psi}{\partial z_{1}^{m-1}\partial\bar{z}_{1}^{n-1}}\d z'\wedge\d\bar{z}'\\
&=-2\pi\i\int_{D}\Res_{A}(\omega\wedge\xi).\qedhere
\end{align*}
\end{proof}


We can define the Aeppli residue for $(d,d)$-forms which have poles along a hypersurface with normal crossings as follows. Suppose $D=D_{1}\cup\dots\cup D_{k}$ for smooth hypersurfaces $D_{1},\dots,D_{k}$ and that $\omega\in H^{d,d}_{A}(*\bar{*}D)$. Considering $\omega$ on $X\setminus D$ we may define its residue with respect to the hypersurface $D_{1}\setminus\big(D_{2}\cup\dots\cup D_{k}\big)$ and we denote it $\Res_{A}^{D_{1}}(\omega)$. We should note here that, even though $X\setminus D$ is not compact, we can define the residue since the orders of the poles of $\omega$ are bounded, cf.\! the remark after Lemma \ref{lemmaConj}.

The residue $\Res_{A}^{D_{1}}(\omega)$ is represented by a form which has poles along the hypersurfaces $D_{1}\cap D_{i}$ and so in particular $\Res_{A}^{D_{1}}(\omega)\in H^{d-1,d-1}_{A}(*\bar{*}D_{\mathrm{sing}})$. We can make the same construction for every $D_{i}$ and then let
\[\Res_{A}^{D}(\omega)=\Res_{A}^{D_{1}}(\omega)+\dots+\Res_{A}^{D_{k}}(\omega).\]
By iterating this construction for the hypersurfaces $D_{i}\cap D_{j}$ in $D$ and so on we may define the Aeppli residues for all normal crossings. In particular, writing $E=D_{1}\cap\dots\cap D_{k}$, we get a residue $\Res_{A}^{E}(\omega)$ which is now represented by a smooth form. We also set $\Res_{A}^{X}(\omega)=\omega$.

We get the following generalisation of Theorem \ref{thmAeppli}.

\begin{theorem}\label{thmAeppli2}
For $\omega\in\E(*\bar{*}D)$ such that $D$ has normal crossings we have
\[\big\langle\{\omega\},\xi\big\rangle_{X}=(-2\pi\i)^{\kappa(\omega)}\Big\langle\big\{\Res_{A}^{E(\omega)}(\omega\wedge\xi)\big\},1\Big\rangle_{E(\omega)}.\]
\end{theorem}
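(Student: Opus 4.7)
The plan is to localise with a partition of unity, apply the explicit formula of Proposition~\ref{PropLocal}, and then strip off the $\kappa(\omega)$ factors of $-2\pi\i$ one coordinate at a time by iterating the Cauchy--Green identity~(\ref{goodEq}) on the indices where $\omega$ has \emph{both} holomorphic and anti-holomorphic poles.

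Using a subordinate partition of unity, assume $\xi$ is supported in a chart $\Delta$ with adapted coordinates so that $D=\{z_{1}\cdots z_{k}=0\}$ and $\omega\wedge\xi=(\psi/z^{J}\bar z^{K})\d z\wedge\d\bar z$ (charts disjoint from $E(\omega)$ contribute zero on both sides). Partition
\[A=\{j:J_{j}\neq 0,\ K_{j}=0\},\quad B=\{j:J_{j}=0,\ K_{j}\neq 0\},\quad C=\{j:J_{j}\neq 0,\ K_{j}\neq 0\},\]
so that $|C|=\kappa(\omega)$ and $\{z_{j}=0:j\in C\}=E(\omega)\cap\Delta$ by the local description of $\kappa$ preceding Definition~2.1. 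Proposition~\ref{PropLocal} gives
\[\big\langle\{\omega\},\xi\big\rangle_{X}=\frac{(-1)^{p}}{(J-1_{J})!(K-1_{K})!}\int_{\Delta}\Big(\prod_{j\in A\cup B\cup C}\log|z_{j}|^{2}\Big)\frac{\partial^{J+K}\psi}{\partial z^{J}\partial\bar z^{K}}\d z\wedge\d\bar z,\]
with $p=|A|+|B|+2|C|$. For each $j\in C$ I would factor $\partial_{z_{j}}\partial_{\bar z_{j}}$ out of the differential operator (legitimate because $J_{j},K_{j}\geq 1$), invoke Fubini, and apply~(\ref{goodEq}) in the $z_{j}$-slice; each such step contributes a factor $-2\pi\i$ and restricts to $\{z_{j}=0\}$, leaving $J_{j}-1$ holomorphic and $K_{j}-1$ anti-holomorphic derivatives remaining. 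After $\kappa$ iterations the left hand side equals
\[\frac{(-1)^{p}(-2\pi\i)^{\kappa}}{(J-1_{J})!(K-1_{K})!}\int_{E(\omega)\cap\Delta}\Big(\prod_{j\in A\cup B}\log|z_{j}|^{2}\Big)\bigg[\prod_{j\in C}\frac{\partial^{J_{j}+K_{j}-2}}{\partial z_{j}^{J_{j}-1}\partial\bar z_{j}^{K_{j}-1}}\bigg]\frac{\partial^{|J'|+|K'|}\psi}{\partial z^{J'}\partial\bar z^{K'}}\bigg|_{E(\omega)}\d z''\wedge\d\bar z'',\]
where $J',K'$ are the restrictions of $J,K$ to indices outside $C$.

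To identify this with the right hand side, I would iterate the single-hypersurface Aeppli residue of Section~3.2 along the components of $E(\omega)$; since the derivatives in the $C$-variables commute with each other and do not meet the $A\cup B$-coordinates, a local representative of $\Res_{A}^{E(\omega)}(\omega\wedge\xi)$ is
\[\frac{1}{\prod_{j\in C}(J_{j}-1)!(K_{j}-1)!}\bigg[\prod_{j\in C}\frac{\partial^{J_{j}+K_{j}-2}}{\partial z_{j}^{J_{j}-1}\partial\bar z_{j}^{K_{j}-1}}\bigg]\Big(\frac{\psi}{\prod_{j\in A\cup B}z_{j}^{J_{j}}\bar z_{j}^{K_{j}}}\Big)\bigg|_{E(\omega)}\d z''\wedge\d\bar z''.\]
This form has only holomorphic poles along $\{z_{j}=0\}$ for $j\in A$ and only anti-holomorphic ones for $j\in B$, so it is semi-meromorphic on $E(\omega)$ with vanishing local $\kappa$; its canonical current therefore coincides with its principal value current, and Proposition~\ref{PropLocal} applied on $E(\omega)$ with test form $1$ produces an integral of exactly the same shape, scaled by $(-1)^{p'}/(J'-1_{J'})!(K'-1_{K'})!$ with $p'=|A|+|B|$. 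Since $p-p'=2|C|$ gives $(-1)^{p-p'}=1$ and
\[(J-1_{J})!(K-1_{K})!=(J'-1_{J'})!(K'-1_{K'})!\prod_{j\in C}(J_{j}-1)!(K_{j}-1)!,\]
the constants cancel as required and the local equality follows; summing over the partition of unity completes the proof, and well-definedness of the right hand side with respect to the choice of representative of $\Res_{A}^{E(\omega)}(\omega\wedge\xi)$ is a consequence, since the left hand side is intrinsic. The main obstacle is the combinatorial bookkeeping of signs, factorials and differential orderings in the two parallel computations, together with unpacking $\Res_{A}^{E(\omega)}$ stratum by stratum on the normal crossings $D$ and checking that the successive single-hypersurface residues commute and assemble into the closed-form representative above.
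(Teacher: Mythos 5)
Your proposal is correct and follows essentially the same route as the paper: localise with a partition of unity, invoke Proposition \ref{PropLocal}, iterate the Cauchy--Green identity (\ref{goodEq}) once for each index where both a holomorphic and an anti-holomorphic pole occur to extract $(-2\pi\i)^{\kappa(\omega)}$ and restrict to $E(\omega)$, and then recognise the remaining integral as the canonical (here principal value) current of $\Res_{A}^{E(\omega)}(\omega\wedge\xi)$ acting on $1$. Your explicit $A,B,C$ index partition and the final factorial/sign cancellation just spell out the bookkeeping the paper leaves implicit in its chain of equalities.
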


\begin{remark}
In the above theorem we take the canonical current of a cohomology class which is \emph{not} a well defined object. However, its action on $1$ is.
\end{remark}

\begin{proof}
Take a partition of unity with the same properties as in the proof of Theorem \ref{thmAeppli}, but now the hypersurface will be given by $z^{I}=0$. Suppose $E(\omega)$ is given by $z_{1}=\dots=z_{\ell}=0$. Then we let $\d z'=\d z_{\ell+1}\wedge\dots\wedge\d z_{d}$. Let $R$ the multi-index which is $1$ in the $\ell$ first positions and otherwise $0$. If we write $p=2\kappa(\omega)+p'$ then
\[p'=\#\{j:J_{j}=0,K_{j}\neq0\}+\#\{K_{j}\neq0,J_{j}=0\}.\]
Now, similar to the proof of Theorem \ref{thmAeppli}, we get
\begin{align*}
&\quad\big\langle\{\omega\},\xi\big\rangle\\
&=\sum_{\iota}\frac{(-1)^{p}}{(J-1_{J})!(K-1_{K})!}\int_{\Delta}\Big(\prod_{j:J_{j}+K_{j}\neq0}\log|z_{j}|^{2}\Big)\frac{\partial^{J+K}\rho_{\iota}\psi}{\partial z^{J}\partial\bar{z}^{K}}\d z\wedge\d\bar{z}\\
&=(-2\pi\i)^{\kappa(\omega)}\sum_{\iota}\frac{(-1)^{2\kappa(\omega)+p'}}{(J-1_{J})!(K-1_{K})!}\int_{\Delta\cap E(\omega)}\!\!\!\Big(\!\!\!\prod_{\substack{j:J_{j}=0,K_{j}\neq0\\\text{or }J_{j}\neq0,K_{j}=0}}\!\!\!\!\!\!\!\!\!\!\log|z_{j}|^{2}\Big)\frac{\partial^{J+K-2R}\rho_{\iota}\psi}{\partial z^{J-R}\partial\bar{z}^{K-R}}\d z'\wedge\d\bar{z}'\\
&=(-2\pi\i)^{\kappa(\omega)}\sum_{\iota}(-1)^{p'}\int_{\Delta\cap E(\omega)}\!\!\!\Big(\!\!\!\prod_{\substack{j:J_{j}=0,K_{j}\neq0\\\text{or }J_{j}\neq0,K_{j}=0}}\!\!\!\!\!\!\!\!\!\!\log|z_{j}|^{2}\Big)\Res_{A}^{E(\omega)}(\omega\wedge\xi\rho_{\iota})\d z'\wedge\d\bar{z}'\\
&=(-2\pi\i)^{\kappa(\omega)}\big\langle\{\Res_{A}^{E(\omega)}(\omega\wedge\xi)\},1\big\rangle_{E(\omega)}.\qedhere
\end{align*}
\end{proof}

The right hand side of Theorem \ref{thmAeppli2} is a bit messy but with one extra assumption we get a cleaner statement.

\begin{corollary}\label{corMain}
For $\omega\in\E(*\bar{*}D)$ such that $D$ has normal crossings and $P^{1,0}(\omega)=P^{0,1}(\omega)$ we have
\[\big\langle\{\omega\},\xi\big\rangle=(-2\pi\i)^{\kappa(\omega)}\int_{E(\omega)}\Res^{E(\omega)}_{A}(\omega\wedge\xi).\]
\end{corollary}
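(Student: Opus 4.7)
The plan is to derive Corollary \ref{corMain} directly from Theorem \ref{thmAeppli2}, which already supplies
\[
\big\langle\{\omega\},\xi\big\rangle_{X} = (-2\pi\i)^{\kappa(\omega)} \Big\langle\big\{\Res^{E(\omega)}_{A}(\omega\wedge\xi)\big\}, 1\Big\rangle_{E(\omega)}.
\]
Everything reduces to showing that the inner canonical current on $E(\omega)$ collapses to ordinary integration once the hypothesis $P^{1,0}(\omega) = P^{0,1}(\omega)$ is imposed.

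First I would translate the hypothesis into the local language of Section 2. In a chart adapted to the normal-crossings polar set, with $J$ and $K$ the minimal multi-indices such that $z^{J}\bar z^{K}\omega$ is smooth, the equality $P^{1,0}(\omega) = P^{0,1}(\omega)$ says precisely that $J_{j}\neq 0 \iff K_{j}\neq 0$ for every $j$. Consequently the auxiliary integer $p'$ from the proof of Theorem \ref{thmAeppli2}, which counts indices where exactly one of $J_{j}, K_{j}$ vanishes, is zero. In particular the residual logarithmic factor $\prod_{j}\log|z_{j}|^{2}$ appearing there is an empty product, equal to $1$.

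Next I would verify that $\Res^{E(\omega)}_{A}(\omega\wedge\xi)$ is represented by a genuine smooth form on $E(\omega)$, not just an Aeppli class with residual polar set. Iterating the residue construction along the strata descending to $E(\omega)$, each step uses Lemma \ref{lemmaAeppli} to strip off one matched pair of $z_{j}$ and $\bar z_{j}$ poles. Because $J_{j}\neq 0 \iff K_{j}\neq 0$, after $\kappa(\omega)$ iterations every holomorphic and antiholomorphic pole has been removed simultaneously, leaving the manifestly smooth local representative
\[
\frac{1}{(J-1_{J})!\,(K-1_{K})!}\,\frac{\partial^{|J|+|K|-2\kappa(\omega)}(\rho\psi)}{\partial z^{J-1_{J}}\,\partial \bar z^{K-1_{K}}}\bigg|_{E(\omega)}\, \d z'\wedge \d\bar z'.
\]
This is the one step where the extra hypothesis is essential, and it is also the only step with any substance.

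Finally, I would observe that for any smooth form $\eta$ on a complex manifold $Y$ one has $\kappa(\eta) = 0$ and the function $F_{\xi}(\lambda)$ defining the canonical current is holomorphic at $\lambda=0$ with value $\int_{Y}\eta\wedge\xi$; thus $\{\eta\}$ is simply the current of integration against $\eta$. Applied with $Y=E(\omega)$ and test form $1$, this identifies $\langle \{\Res^{E(\omega)}_{A}(\omega\wedge\xi)\}, 1\rangle_{E(\omega)}$ with $\int_{E(\omega)} \Res^{E(\omega)}_{A}(\omega\wedge\xi)$ (the integral is representative-independent since $\partial$- or $\dbar$-exact forms integrate to zero over compact $E(\omega)$ by Stokes, so the Aeppli class suffices). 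Combined with Theorem \ref{thmAeppli2}, this yields the claimed formula; the hypothesis $P^{1,0}=P^{0,1}$ enters exactly through the identity $p'=0$.
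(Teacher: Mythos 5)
Your proposal is correct and follows the same route as the paper: the paper's own proof simply observes that the hypothesis $P^{1,0}(\omega)=P^{0,1}(\omega)$ makes $\Res_{A}^{E(\omega)}(\omega\wedge\xi)$ smooth on $E(\omega)$, whence the canonical current in Theorem \ref{thmAeppli2} collapses to ordinary integration. You have merely filled in the details (the equivalence with $J_{j}\neq0\iff K_{j}\neq0$, the vanishing of $p'$, and the identification of $\{\eta\}$ with integration for smooth $\eta$) that the paper leaves implicit.
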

\begin{proof}
Under these assumptions $\Res_{A}^{E(\omega)}(\omega\wedge\xi)$ is smooth on $E(\omega)$ so the statement follows from Theorem \ref{thmAeppli2}.
\end{proof}

\section{Analytic continuation of divergent integrals}

We will use the results in the previous sections to describe asymptotic expansions coming from analytic continuations of divergent integrals. In this section we drop the point of view of currents of quasi-meromorphic forms. Instead we suppose we have two semi-meromorphic forms $\alpha$ and $\beta$, on a compact complex manifold $X$, which have poles along the same hypersurface $D$. As before we assume $D$ to have normal crossings. We write
\[D_{d}\subset\dots\subset D_{1}\subset D_{0}\]
for the natural stratification of $D$, cf.\! (\ref{eqStratification}) in Section 2. Recall that $D_{0}=X$ and $D_{1}=D$. Regularising the integral
\[\int_{X}\alpha\wedge\bar{\beta}\]
we use Theorem \ref{thmPrinc} to get the asymptotic expansion
\[\int_{X}|s|^{2\lambda}\alpha\wedge\bar{\beta}=\lambda^{-\kappa}C_{-\kappa}+\dots+\lambda^{-1}C_{-1}+C_{0}+\O\big(|\lambda|\big)\]
where $\kappa=\kappa(\alpha\wedge\bar{\beta})$. Interpreting Corollary \ref{corMain} in this setting we get
\[C_{-\kappa}=\frac{(-2\pi\i)^{\kappa}}{o(s)}\int_{D_{\kappa}}\Res_{A}\big(\alpha\wedge\bar{\beta}\big)\]
where $o(s)=o_{\alpha\wedge\bar{\beta}}(s)$. We will now make some calculations of the other coefficients and we will in particular see how they depend on the metric. The coefficients also depend on the choice of section but as long as we do not change the line bundle this can be seen as a change of metric. The result is the following theorem.

\begin{theorem}\label{thmAsymp}
For the coefficients $C_{-r}$ in the asymptotic expansion
\[\int_{X}|s|^{2\lambda}\alpha\wedge\bar{\beta}=C_{-\kappa}\lambda^{-\kappa}+\dots+C_{-1}\lambda^{-1}+C_{0}+\O\big(|\lambda|\big)\]
we have
\begin{description}
\item{(a)} $C_{-r}$ depends polynomially of degree $\kappa-r$ on the metric. More precisely, if $\phi$ is the difference of two metrics then there are differential operators $Q_{r,j}$ with integrable coefficients such that
\[C_{-r}(\phi)=\sum_{j=0}^{\kappa-r}\int_{X}Q_{r,j}(\phi^{j}).\]
\item{(b)} The term $\int_{X}Q_{r,\kappa-r}(\phi^{\kappa-r})$ may be written
\[\frac{(-2\pi\i)^{\kappa}(-2)^{\kappa-r}}{o(s)(\kappa-r)!}\int_{D_{\kappa}}\Res_{A}\big(\phi^{\kappa-r}\alpha\wedge\bar{\beta}\big),\]
\item{(c)} $C_{-r}$ may be written as an integral over $D_{r}$, i.e.\! the codimension $r$ components in the stratification of $D$.
\end{description}
\end{theorem}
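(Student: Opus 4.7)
The plan is to Taylor-expand the metric-dependent exponential around a fixed reference metric and then apply the Laurent-expansion machinery of Section 2 term by term. Write $|\cdot|=|\cdot|_{0}\e^{-\phi}$, so that $|s|^{2\lambda}=|s|_{0}^{2\lambda}\e^{-2\lambda\phi}$, and Taylor-expand
\[\e^{-2\lambda\phi}=\sum_{k=0}^{\kappa}\frac{(-2\lambda\phi)^{k}}{k!}+\O(\lambda^{\kappa+1}).\]
Each $G_{k}(\lambda):=\int_{X}|s|_{0}^{2\lambda}\phi^{k}\alpha\wedge\bar{\beta}$ has, by Theorem \ref{thmPrinc}, a pole of order at most $\kappa$ at $\lambda=0$, since multiplication by the smooth factor $\phi^{k}$ affects neither $\kappa$ nor $o(s)$. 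The tail $\O(\lambda^{\kappa+1})$ integrates to a quantity of order $\O(\lambda)$, and collecting the coefficient of $\lambda^{-r}$ yields the working identity
\[C_{-r}=\sum_{k=0}^{\kappa-r}\frac{(-2)^{k}}{k!}\,[\lambda^{-r-k}]\,G_{k}(\lambda).\]

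Part (a) follows immediately: each Laurent coefficient $[\lambda^{-r-k}]G_{k}$ is the pairing of $\phi^{k}$ with a current depending only on $\alpha\wedge\bar{\beta}$ and $s$, so $C_{-r}$ is polynomial in $\phi$ of degree $\leq\kappa-r$. The operators $Q_{r,j}$ are read off as the distributional kernels of these pairings, and their integrability is guaranteed by Proposition \ref{PropLocal}, which realises the Laurent coefficients as absolutely convergent integrals of derivatives of $\phi^{j}$ against log-type weights.

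For (b) the top-$\phi$-degree summand ($k=\kappa-r$) singles out the leading pole of $G_{\kappa-r}$. Directly from the definition of the canonical current,
\[[\lambda^{-\kappa}]G_{\kappa-r}(\lambda)=\frac{1}{o(s)}\big\langle\{\phi^{\kappa-r}\alpha\wedge\bar{\beta}\},1\big\rangle,\]
and Corollary \ref{corMain} (applicable because $\alpha$ and $\beta$ share the polar set $D$, so $P^{1,0}(\alpha\wedge\bar{\beta})=P^{0,1}(\alpha\wedge\bar{\beta})$) rewrites this as $\frac{(-2\pi\i)^{\kappa}}{o(s)}\int_{D_{\kappa}}\Res_{A}(\phi^{\kappa-r}\alpha\wedge\bar{\beta})$. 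Multiplying by the prefactor $(-2)^{\kappa-r}/(\kappa-r)!$ from the working identity then produces exactly the formula claimed in (b).

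Part (c) is the main obstacle, and I would attack it by localization. Using a partition of unity subordinate to a cover of $X$ by coordinate charts adapted to the normal crossings structure of $D$, the coefficient $C_{-r}$ decomposes into a sum of local contributions. At any point $x$ where the local invariant $\kappa_{x}(\alpha\wedge\bar{\beta})<r$, the local version of Theorem \ref{thmPrinc}~(c) shows that the local integral has a pole of order at most $\kappa_{x}<r$ at $\lambda=0$, so that its contribution to $C_{-r}$ vanishes. Hence $C_{-r}$ is supported on $\{x:\kappa_{x}(\alpha\wedge\bar{\beta})\geq r\}\subseteq D_{r}$, which already gives (c) qualitatively. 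To exhibit an explicit integrand on $D_{r}$, one would refine the local analysis of (\ref{egG})--(\ref{eqInt2}) and apply the iterated Cauchy--Green identity (\ref{goodEq}) to collapse the doubly singular coordinates, while the singly singular ones contribute principal-value factors on the stratum $D_{r+k}\subseteq D_{r}$; this combinatorial bookkeeping, distinguishing leading from sub-leading Laurent coefficients, is where the bulk of the technical work will lie.
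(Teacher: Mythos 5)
Your treatment of (a) and (b) is essentially the paper's argument in a cleaner packaging: the paper Taylor-expands $h(\lambda)g(\lambda)$ and extracts the $\phi$-dependence through the product rule applied to $\e^{-2\lambda\phi}\psi$ inside $g^{(k)}(0)$, which is exactly the expansion of $\e^{-2\lambda\phi}$ that you perform globally at the outset; your version makes (a) transparent and reduces (b) to Corollary \ref{corMain} just as the paper does. One point to tighten: the remainder of your Taylor expansion cannot be discarded by the pointwise bound $\O(|\lambda|^{\kappa+1})$ alone, because $\int_X|s|_0^{2\lambda}|\alpha\wedge\bar\beta|$ diverges at $\lambda=0$. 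You need to write the remainder as $\lambda^{\kappa+1}\widetilde{R}(\lambda,x)$ with $\widetilde{R}$ holomorphic in $\lambda$ and smooth in $x$, and note that the proof of Theorem \ref{thmPrinc}(c) applies verbatim to such $\lambda$-dependent smooth factors (it already handles $\e^{-2\lambda\phi}$), so the corresponding integral has a pole of order at most $\kappa$ and the tail contributes $\O(\lambda)$.

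The genuine gap is in (c). Localization together with the bound on the local pole order only shows that $C_{-r}$ is a sum of contributions from charts meeting $\{x:\kappa_{x}\geq r\}$; that locates the contributions but does not produce an integral over the codimension-$r$ stratum $D_{r}$, and you explicitly defer the step that would. The missing argument is short and is the actual content of the paper's proof of (c): in a chart where $I_{1},\dots,I_{\kappa}\neq0$ and the remaining $I_{j}$ vanish, every nonvanishing term contributing to $C_{-r}$ comes from a $g^{(k)}(0)$ with $k\leq p-r=2\kappa-r$ and carries a multi-index $M$ satisfying $M_{j}\geq1$ for all $j$ with $I_{j}\neq0$ and $\sum_{j}M_{j}=\ell\leq 2\kappa-r$. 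By pigeonhole at least $r$ of these $\kappa$ entries equal exactly $1$, and for each such $j$ one application of the Cauchy--Green identity (\ref{goodEq}) absorbs the factor $\log|z_{j}|^{2}$ together with one $\partial/\partial z_{j}$ and one $\partial/\partial\bar{z}_{j}$ into a restriction to $\{z_{j}=0\}$. Iterating over those $r$ indices collapses every term to an integral over a codimension-$r$ stratum, which is (c). Without this pigeonhole-plus-reduction step your proposal does not establish (c).
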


\begin{proof}
Similarly as in Section 2.2 we let
\begin{align*}
F(\lambda)&=o(s)\int_{X}|s|^{2\lambda}\alpha\wedge\bar{\beta}
\end{align*}
and from the proof of Theorem \ref{thmPrinc} we get
\[F(\lambda)=\frac{(-1)^{|J|+|K|}o(s)}{\lambda^{p}}h(\lambda)g(\lambda)\]
where
\[g(\lambda)=\sum_{\iota}\int_{\Delta}|z^{I}|^{2\lambda}\frac{\partial^{J+K}}{\partial z^{J}\partial\bar{z}^{K}}\big(\e^{-2\lambda\phi}\psi_{\iota}\big)\d z\wedge\d\bar{z},\]
$\psi_{\iota}$ is given by $\big(\psi_{\iota}/(z^{J}\bar{z}^{K})\big)\d z\wedge\d\bar{z}=\rho_{\iota}\alpha\wedge\bar{\beta}$ and $h$ and $p$ is given by Lemma \ref{lemmaMulti}. We may choose $J$ and $K$ independent of $\iota$. From now on we will suppress $\iota$ and $\rho_{\iota}$. Since we have assumed that $\alpha$ and $\beta$ have poles along the same hypersurface $p=2\kappa$. From the proof of Theorem \ref{thmPrinc} we know that $g^{(k)}(0)=0$ for $k=0,\dots,p-\kappa-1$. Taylor expanding $hg$ we get, for $r=0,1\dots,\kappa$,
\[C_{-r}=\frac{(-1)^{|J|+|K|}}{(p-r)!}\sum_{k=p-\kappa}^{p-r}{{p-r}\choose{k}}h^{(p-r-k)}(0)g^{(k)}(0).\]
Lemma \ref{lemmaMulti} implies that the derivatives of $h$ are combinatorial expressions involving $J$ and $K$. From the proof of Theorem \ref{thmPrinc} we also get
\begin{align*}
g^{(k)}(0)=\sum_{\ell=\kappa}^{k}{{k}\choose{\ell}}(-2)^{k-\ell}\sum_{M}{{\ell}\choose{M}}\int_{\Delta}\prod_{j=1}^{d}\big(I_{j}\log|z_{j}|^{2}\big)^{M_{j}}\frac{\partial^{J+K}}{\partial z^{J}\partial\bar{z}^{K}}\big(\psi\phi^{k-\ell}\big)\d z\wedge\d\bar{z}
\end{align*}
and hence we have proven the first part of (a), that $C_{-r}=\int_{X}\sum Q_{r,j}(\phi^{j})$ for some differential operators $Q_{r,j}$. We further see that the highest power of $\phi$ is obtained when $k$ is as large as possible and $\ell$ is as small as possible. Thus setting $k=p-r$, $\ell=\kappa$ and collecting the constants we get that the leading term is given by
\begin{align*}
&\frac{(-1)^{|J|+|K|}(-2)^{\kappa-r}}{(\kappa-r)!}h(0)\int_{\Delta}\prod_{j=1}^{d}\big(I_{j}\log|z_{j}|^{2}\big)^{M_{j}}\frac{\partial^{J+K}}{\partial z^{J}\partial\bar{z}^{K}}\big(\psi\phi^{\kappa-r}\big)\d z\wedge\d\bar{z}\\
&=\frac{(-2\pi\i)^{\kappa}(-2)^{\kappa-r}}{o(s)(\kappa-r)!}\int_{D_{\kappa}}\Res_{A}(\phi^{\kappa-r}\alpha\wedge\bar{\beta})
\end{align*}
if we do a similar calculation as in the proof of Proposition \ref{PropLocal}. This proves the rest of (a) and (b).

To prove (c) we may suppose that $I_{1},\dots,I_{\kappa}\neq0$ and $I_{\kappa+1},\dots,I_{d}=0$. We must show that we can reduce all the integrals in all the derivatives of $g$ to an integral over $D_{r}$. Let us look at $g^{(k)}$ for $k=\kappa,\dots,p-r$. In the expression for the derivative we have a multi-index $M$ such that $\sum_{j}M_{j}=\ell$, where $\ell\leq k$. We have seen that when $M_{i}=1$, so that we have $\log|z_{i}|^{2}$ in the integral, we may reduce it to an integral over $\Delta\cap\{z_{i}=0\}$.

First let $M_{1}=\dots=M_{\kappa}=1$. But then we need to add $\ell-\kappa$ to these indices, i.e.\! at most we need to add $p-r-\kappa=\kappa-r$. But if we add $1$ to $\kappa-r$ different $M_{j}$ there are still $r$ number of $M_{j}$ which are equal to one. Furthermore, in these variables we may reduce the integrals $r$ times, hence to codimension $r$. Adding more than one to some $M_{j}$ only makes it better.
\end{proof}

Theorem \ref{thmAsymp} points out why we call the currents defined from quasi-meromorphic forms  canonical; the currents come from the only coefficient in the asymptotic expansion which is independent of the metric. In the special case that $D$ is a smooth hypersurface we get the following corollary.

\begin{corollary}
If $D$ is a smooth hypersurface then 
\[\int_{X}|s|^{2\lambda}\alpha\wedge\bar{\beta}=\lambda^{-1}C_{-1}+C_{0}+\O\big(|\lambda|\big)\]
with $C_{-1}=-\frac{2\pi\i}{o(s)}\int_{D}\Res_{A}(\alpha\wedge\bar{\beta})$ and
\[C_{0}(\phi)=\frac{4\pi\i}{o(s)}\int_{D}\Res_{A}\big(\phi\alpha\wedge\bar{\beta}\big).\]
\end{corollary}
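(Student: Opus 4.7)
The plan is to specialise Theorem~\ref{thmAsymp} to the case where $D$ is smooth. The first step is to check that $\kappa(\alpha\wedge\bar\beta) = 1$: around any point of $D$ I can choose local coordinates with $D = \{z_1 = 0\}$, so that $\alpha\wedge\bar\beta$ takes the form $(\psi/z_1^m\bar z_1^n)\,\d z\wedge\d\bar z$ with $m,n\ge 1$. Exactly one coordinate then carries both a holomorphic and an anti-holomorphic pole, giving $\kappa = 1$. The stratification collapses to $D_0 = X$, $D_1 = D$ (with $D_k$ empty for $k\ge 2$), and Theorem~\ref{thmPrinc}(c) forces the Laurent expansion to truncate to $\lambda^{-1}C_{-1} + C_0 + \O(|\lambda|)$.

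Next, I would read off $C_{-1}$ by substituting $\kappa = 1$, $r = 1$ into the formula of Theorem~\ref{thmAsymp}(b) — equivalently into Corollary~\ref{corMain}, whose hypothesis $P^{1,0}(\omega) = P^{0,1}(\omega)$ is automatic when $D$ is smooth and both $\alpha$ and $\beta$ are polar along $D$. This yields
\[
C_{-1} \;=\; \frac{(-2\pi\i)^{1}(-2)^{0}}{o(s)\cdot 0!}\int_{D}\Res_A(\alpha\wedge\bar\beta) \;=\; -\frac{2\pi\i}{o(s)}\int_{D}\Res_A(\alpha\wedge\bar\beta),
\]
matching the claimed formula.

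For $C_0(\phi)$, Theorem~\ref{thmAsymp}(a) with $\kappa = 1$, $r = 0$ asserts that $C_0$ is polynomial of degree $1$ in the metric difference $\phi$. Theorem~\ref{thmAsymp}(b) at the same indices identifies the (top-)degree-$1$ piece as
\[
\frac{(-2\pi\i)^{1}(-2)^{1}}{o(s)\cdot 1!}\int_{D}\Res_A(\phi\alpha\wedge\bar\beta) \;=\; \frac{4\pi\i}{o(s)}\int_{D}\Res_A(\phi\alpha\wedge\bar\beta),
\]
which is the displayed expression. The notation $C_0(\phi)$ in the corollary records precisely this $\phi$-dependent part; the $\phi$-independent term $\int_X Q_{0,0}(1)$ coming from Theorem~\ref{thmAsymp}(a) is the value at the reference metric, and the degree-$1$ polynomiality guarantees no further $\phi$-dependent terms are hidden.

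No step is genuinely hard; the main thing to get right is tracking the signs and the factorial in Theorem~\ref{thmAsymp}(b) at the specific indices $\kappa = 1$, $r \in \{0,1\}$, and clarifying that in the smooth-hypersurface setting Theorem~\ref{thmAsymp}(c) gives the integrals over $D_r = D$ for $r=1$ and $D_0 = X$ for $r=0$, with the latter reducing to the stated boundary integral via the residue.
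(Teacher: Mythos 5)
Your proof is correct and follows the same route as the paper: the corollary is a direct specialisation of Theorem \ref{thmAsymp} (together with Corollary \ref{corMain}) to $\kappa=1$, $r\in\{0,1\}$, and the paper offers no further argument for it. Your reading of $C_{0}(\phi)$ as the $\phi$-dependent (degree-one) part, with the $\phi$-independent term $\int_{X}Q_{0,0}(1)$ being the value at the reference metric, is the intended interpretation.
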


%


\bibliography{bibliography}

\end{document}